\def\({\left(}
\def\){\right)}
\def\Nx{\nabla}
\def\divv{\operatorname{div}}
\def\eb{\varepsilon}
\def\eb{\varepsilon}
\def\dist{{\rm dist}}
\def\R {\mathbb{R}}
\def \and{\qquad\text{and}\qquad}
\def\Dt{\partial_t}
\def\Dx{\Delta}
\newtheorem{proposition}{Proposition}[section]
\newtheorem{theorem}[proposition]{Theorem}
\newtheorem{corollary}[proposition]{Corollary}
\newtheorem{lemma}[proposition]{Lemma}
\theoremstyle{definition}
\newtheorem{definition}[proposition]{Definition}
\newtheorem{remark}[proposition]{Remark}
\numberwithin{equation}{section}
\def \no#1#2#3 {{\bf #1} (#3), #2.}
\def \eds#1#2#3 {#1, #2, #3.}
\title[Attractor for the Brinkman-Forchheimer Equations]
{Smooth Attractors for the Brinkman-Forchheimer equations with  fast growing nonlinearities}
\author[]
{Varga K. Kalantarov and Sergey Zelik}
\address{(V.K.Kalantarov) Department of mathematics,
\newline\indent Ko{\c c} University, Rumelifeneri Yolu, Sariyer, Istanbul, Turkey
}
\address{(S.K.Zelik) Department of mathematics, \newline \indent University of Surrey
Guildford, GU2 7XH, UK}
\keywords{Brinkmann-Forchheimer equations, attractors, maximal regularity, nonlinear localization}
\begin{document}

\begin{abstract}{We prove the existence of regular dissipative solutions and global attractors for the 3D Brinkmann-Forchheimer equations
 with the nonlinearity of an arbitrary polynomial growth rate. In order to obtain this result, we prove the maximal
 regularity estimate for the corresponding semi-linear stationary Stokes problem using some modification of the
  nonlinear localization technique. The applications of our results to the Brinkmann-Forchheimer equation with
  the Navier-Stokes inertial term are also considered.}
\end{abstract}
\subjclass{35B40, 35B41, 35Q35}
\maketitle

\section{Introduction}\label{s0}

We study  the
Brinkman-Forchheimer (BF) equations in the following form:
\begin{equation}\label{bf1}
\begin{cases}
\Dt u-\Delta u +f(u)+\nabla p =g, \ \ \divv u=0, \\
u\big|_{\partial\Omega}=0,\ \ u\big|_{t=0}=u_0.
\end{cases}
\end{equation}
Here $\Omega \subset \R^3$ is an open, bounded domain  with $C^2$
boundary $\partial \Omega$, $ g=g(x)=(g_1,g_2,g_3)$  is a given
function, $u=(u_1,u_2,u_3)$ is the fluid velocity vector, $p$ is the
pressure and $f$ is a given nonlinearity.
\par
The BF equations are used to describe the fluid flow in a saturated porous media, see \cite{D,str} and references therein.
The typical example for
$f$ is the following one:
\begin{equation}\label{nf}
 f(u)=au+b|u|^{r-1}u, \ r \in [1,\infty),
\end{equation}
where $a\in R$ and $b>0$ are the Darcy and Forchheimer coefficients respectively (the original Brinkman-Forchheimer model corresponds to the choice $r=2$, more complicated nonlinear terms ($r\ne2$) appear, e.g.,
in the theory of non-Newtonian fluids, see \cite{Sh}). Note also that the analogous equations are
used in the study of  tidal dynamics (see \cite{Go},\cite{Lik}).
\par
 Number of papers is devoted to the mathematical study of
of the BF equations, for instance, continuous dependence on
changes in Brinkman and Forchheimer coefficients and convergence of
solutions of BF equations to the solution of the Forchheimer
equation
$$
\Dt u+f(u)+\Nx p=g,  \\ \divv u=0,
$$
as the viscosity tends to zero have been established in
\cite{CKU,CKU1,LL,PS,str} (see also  references therein), and
the long-time behavior of solutions for \eqref{bf1} has been studied in terms of global attractors in   \cite{QY}
\cite{Ug} and \cite{WS}. However, to the best of our knowledge, only the case of the so-called subcritical growth rate of the nonlinearity $f$
 ($r\le3$ in \eqref{nf}) has been considered in the literature.
\par
The main aim of the present paper is to remove this growth restriction and  verify the global existence, uniqueness and dissipativity of smooth solutions of the BF equations for  the large class of nonlinearites $f$ of the arbitrary growth exponent $r\ge1$.
\par
Namely, we  assume that $f\in C^2(\R^3,\R^3)$
satisfies the following conditions:
\begin{equation}\label{0.fcond}
\begin{cases}
1)\ \ f'(u)v.v\geq (-K+\kappa|u|^{r-1})|v|^2, \ \forall u,v \in \R^3,\\
2)\ \   |f'(u)|\le C(1+|u|^{r-1}),\ \forall u \in \R^3,
\end{cases}
\end{equation}
where  $K,C,\kappa$ are some positive constants, $r\ge1$ and $u.v$ stands for the standard inner product in $\R^3$.
\par
Our key technical tool is the maximal regularity result for the
stationary problem
\begin{equation}
-\Dx w+f(w)+\Nx p=g,\ \ \divv w=0,\ u\big|_{\partial\Omega}=0
\end{equation}
which claims that the solution $w$ belongs to $H^2$ if $g\in L^2$. This result is straightforward
 for the case of periodic boundary conditions (it follows via the multiplication
 of the equation by $\Dx w$ and integrating by parts). However, for the case of Dirichlet
 boundary conditions it is far from being immediate since the additional uncontrollable boundary terms arise
  after the multiplication of the equation by $\Dx w$ and integrating by parts. Following the approach developed in \cite{KZ}, we
  overcome this problem using some kind of {\it nonlinear} localization technique, see Apendix below.
\par
In addition, we apply our maximal regularity result in order to establish the existence of smooth solutions for the so-called
convective BF equations:
\begin{equation}\label{0.ns}
\begin{cases}
\Dt u+(u,\Nx)u-\Delta u +f(u)+\nabla p =g, \ \ \divv u=0,\\
u\big|_{\partial\Omega}=0,\ \ u\big|_{t=0}=u_0
\end{cases}
\end{equation}
under the assumption \eqref{nf} with $r>3$. Note that the case $f=0$
corresponds to the classical Navier-Stokes problem where the
existence of smooth solutions is an open problem. However, as also
known (see \cite{RZh}) the sufficiently strong nonlinearity $f$
produces some kind of regularizing effect. Again, in contrast to the
previous works,  no upper bounds for the exponent $r$ are posed
here.
\par
The paper is organized as follows. A number of a priori estimates which are necessary to handle equation \eqref{bf1} is given
in Section \ref{s1}. Existence, uniqueness and regularity of solutions for the BF equation
 as well as the existence of the associated global attractor are established in Section \ref{s2}. These results are
  extended to the case of  convective BF equations \eqref{0.ns} in Section \ref{s3}.
   Finally, the crucial maximal regularity results for the stationary equations \eqref{bf1} and \eqref{0.ns}
    are obtained in Appendix.







\section{A priori estimates.}\label{s1}
In this section, we obtain a number of a priori estimates for the
solutions of the problem \eqref{bf1} assuming that the sufficiently
regular solution $(u,p)$ of this equation is given. These estimates
will be used in the next sections in order to establish the
existence and uniqueness of solution, their regularity, etc.
\par
We start with introducing the standard notations. As usual, we denote by $W^{l,p}(\Omega)$ the Sobolev space of all functions
whose distributional derivatives up to order $l$ belong to $L^p(\Omega)$. The Hilbert spaces $W^{l,2}(\Omega)$ will
 be also denoted by $H^l(\Omega)$.
\par
For the vector valued functions $v =(v_1,v_2,v_3),$ and $ u=(u_1,u_2,u_3)$ we denote
by $(u,v)$ the standard inner product in $[L^2(\Omega)]^3$:
$$
(u,v):=\sum\limits_{j=1}^3(v_j,u_j)_{L^2(\Omega)}, \
$$
and write $\|\Nx u\|^2_{L^2}$ instead of $\sum\limits_{i=1}^3\|\Nx
u_i\|^2_{L^2}$. In the sequel, where it does
 not lead to misunderstandings, we will also use the notation $H^l(\Omega)$ and $W^{l,p}(\Omega)$ for
  the spaces of vector valued functions $[H^l(\Omega)]^3$ and $[W^{l,p}(\Omega)]^3$ respectively.
\par
 As usual, we set
 $$
  \mathcal{V}:=\left\{v \in
(C_0^{\infty}(\Omega))^3: \ \divv v=0\right\},
 $$
 and denote by $H$ and $H_1=V$ the closure
of $\mathcal{V}$ in $L_2(\Omega)$ and $H^1(\Omega)$ topology
respectively. And, more generally, $H^s:=D(A^{s/2})$, where
$A:=\Pi\Dx$ and $\Pi$ is the classical Helmholz-Leray orthogonal
projection in $L^2(\Omega)$ onto the space $H$. In particular, since
$\Omega$ is smooth and bounded, we have
$$
H=\{u\in L^2(\Omega),\ \divv u=0, (u,n)\big|_{\partial\Omega}=0\},\ H^1:=H\cap H^1_0(\Omega),\ \ H^2=H^1\cap H^2(\Omega),
$$
see e.g. \cite{La}.
\par
The next lemma gives  the usual energy estimate for the BF equation.

\begin{lemma} \label{Lem1.en} Let $(u,p)$ be a sufficiently smooth solution of problem \eqref{bf1}. Then the following
estimate holds:
\begin{equation}\label{1.en}
\|u(t)\|_{L^2}^2+\int_t^{t+1}\left[\|\Nx
u(s)\|^2_{L^2}+\|u(s)\|^{r+1}_{L^{r+1}}\right] \ ds\le
C\|u(0)\|_{L^2}^2e^{-\alpha t}+C(1+\|g\|^2_{L^2}),
\end{equation}
where the positive constants $C$ and $\alpha$ are independent of $t$ and the concrete choice of the solution $(u,p)$.
\end{lemma}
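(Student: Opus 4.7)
The plan is to carry out the usual $L^2$ energy estimate. First I would take the inner product of the first equation in \eqref{bf1} with $u$ in $L^2(\Omega)$. Since $\divv u=0$ and $u\big|_{\partial\Omega}=0$, the pressure term $(\Nx p,u)$ vanishes after integration by parts, and the viscous term $-(\Dx u, u)$ produces $\|\Nx u\|_{L^2}^2$. This yields the identity
\begin{equation*}
\frac12\frac{d}{dt}\|u\|_{L^2}^2+\|\Nx u\|_{L^2}^2+(f(u),u)=(g,u).
\end{equation*}

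Next I would derive a pointwise lower bound for $f(u)\cdot u$ from assumption \eqref{0.fcond}(1). Writing
\begin{equation*}
f(u)\cdot u=f(0)\cdot u+\int_0^1 f'(su)u\cdot u\,ds\geq f(0)\cdot u-K|u|^2+\frac{\kappa}{r}|u|^{r+1},
\end{equation*}
and integrating over $\Om$, I get $(f(u),u)\ge\frac{\kappa}{r}\|u\|_{L^{r+1}}^{r+1}-K\|u\|_{L^2}^2-C$. I then estimate the right-hand side by Cauchy--Schwarz and Young: $(g,u)\le\frac12\|g\|_{L^2}^2+\frac12\|u\|_{L^2}^2$. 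The only delicate term is $K\|u\|_{L^2}^2$. For $r>1$, Young's inequality with exponents $\frac{r+1}{2}$ and $\frac{r+1}{r-1}$ absorbs it: $K|u|^2\le\frac{\kappa}{2r}|u|^{r+1}+C_K$. For $r=1$, the term is linearly dominated by the coercive part plus Poincar\'e's inequality $\|u\|_{L^2}^2\le \lambda_1^{-1}\|\Nx u\|_{L^2}^2$, possibly after adding and subtracting a suitable multiple of this inequality.

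After these absorptions I arrive at a differential inequality of the form
\begin{equation*}
\frac{d}{dt}\|u\|_{L^2}^2+\|\Nx u\|_{L^2}^2+\frac{\kappa}{r}\|u\|_{L^{r+1}}^{r+1}\le C(1+\|g\|_{L^2}^2).
\end{equation*}
Dropping the $L^{r+1}$ term and applying Poincar\'e once more gives $\frac{d}{dt}\|u\|_{L^2}^2+\alpha\|u\|_{L^2}^2\le C(1+\|g\|_{L^2}^2)$ for some $\alpha>0$, and the classical Gronwall inequality yields the pointwise bound $\|u(t)\|_{L^2}^2\le \|u(0)\|_{L^2}^2 e^{-\alpha t}+C(1+\|g\|_{L^2}^2)$.

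Finally, for the integral part of \eqref{1.en}, I would integrate the differential inequality over the interval $[t,t+1]$ and use the pointwise bound just obtained on $\|u(t)\|_{L^2}^2$ to control the boundary contribution. This delivers $\int_t^{t+1}\bigl[\|\Nx u(s)\|_{L^2}^2+\|u(s)\|_{L^{r+1}}^{r+1}\bigr]\,ds\le C\|u(0)\|_{L^2}^2 e^{-\alpha t}+C(1+\|g\|_{L^2}^2)$, which combined with the pointwise bound is \eqref{1.en}. The only mildly technical point is handling the $-K|u|^2$ contribution uniformly in $r\ge1$; everything else is a routine energy computation.
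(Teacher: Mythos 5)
Your proof runs along exactly the same lines as the paper's: multiply \eqref{bf1} by $u$, use $\divv u=0$ and the no-slip condition to kill the pressure term, bound $(f(u),u)$ from below by a coercive term, and close with Gronwall. Your derivation of the pointwise bound $f(u)\cdot u\ge f(0)\cdot u-K|u|^2+\frac{\kappa}{r}|u|^{r+1}$ via $f(u)\cdot u=f(0)\cdot u+\int_0^1 f'(su)u\cdot u\,ds$ is precisely the justification of the inequality $f(u)\cdot u\ge -C+\kappa|u|^{r+1}$ that the paper simply quotes without proof, and for $r>1$ your absorption of $K|u|^2$ into the $|u|^{r+1}$ term by Young's inequality makes the argument complete and correct.

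The one genuine gap is the borderline case $r=1$, which you flag but do not actually close. There the coercive part of $f$ is only $\kappa|u|^2$, so after all absorptions you need $\|\Nx u\|_{L^2}^2-(K-\kappa)\|u\|_{L^2}^2\ge \alpha\|u\|_{L^2}^2$ for some $\alpha>0$, and Poincar\'e delivers this only when $K-\kappa<\lambda_1$; no amount of ``adding and subtracting a suitable multiple'' of Poincar\'e's inequality can do better. In fact, for large $K$ the statement itself fails at $r=1$: take $f(u)=-Mu$ with $M>\lambda_1$ (this satisfies \eqref{0.fcond} with $r=1$ and $K=M+\kappa$), $g=0$, and $u_0$ the first Stokes eigenfunction; then $u(t)=e^{(M-\lambda_1)t}u_0$, with the matching pressure, is a smooth solution whose $L^2$-norm grows exponentially, contradicting \eqref{1.en}. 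To be fair, this defect is inherited from the paper rather than introduced by you: the quoted bound $f(u)\cdot u\ge -C+\kappa|u|^{r+1}$ does not follow from \eqref{0.fcond} when $r=1$ and $K\ge\kappa$, so the paper's own proof tacitly assumes either $r>1$ or a smallness condition on $K$. You should state that restriction explicitly (or assume the coercivity of $f(u)\cdot u$ directly) instead of hoping Poincar\'e rescues the general case.
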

\begin{proof} Indeed, multiplying equation \eqref{bf1} by $u$, integrating over $x\in\Omega$, using that $f(u).u\ge -C+\kappa|u|^{r+1}$ and $(\Nx p,u)=(p,\divv u)=0$ and arguing in a standard way, we have
\begin{equation}
\frac12\Dt \|u(t)\|^2_{L^2}+\alpha\|u(t)\|_{H^1}^2+\alpha\|u(t)\|^{r+1}_{L^{r+1}}\le C(1+\|g\|^2_{L^2})
\end{equation}
for some positive $\alpha$ and $C$ which are independent of $u$ and
$t$. Applying the Gronwall inequality to the last estimate, we
derive \eqref{1.en} and finish the proof of the lemma.
\end{proof}

\begin{remark} \label{Rem1.bad} The standard (for the reaction-diffusion equations) next step in a priori estimates would
be the multiplication of equation \eqref{bf1} by $\Dx u$ (or $t\Dx
u$) and obtaining the dissipative estimate in $H^1$ together with
the $L^2\to H^1$ parabolic smoothing property. However, in our case,
this scheme looks not applicable since $\Dx
u\big|_{\partial\Omega}\ne0$ in general and the term with pressure
will not disappear. Multiplication by $\Pi\Dx u$ (where $\Pi$ is the
Helmholz-Leray projector to the divergent free vector fields) also
does not work due to the presence  of the non-linearity $f$ with
arbitrary growth rate. So, we have to skip this step and estimate
the $L^2$-norm of $\Dt u$ instead differentiating equation by $t$
and using the quasi-monotonicity of $f$. The $H^1$ (and $H^2$)
estimate will be obtained after that using the maximal regularity
theorem for the elliptic problem \eqref{st1}, see Appendix).
\end{remark}
The next simple corollary is, however, crucial for our method of proving the existence and dissipativity of the $H^2$-solutions.

\begin{corollary}\label{Cor1.dt} Let $(u,p)$ be a sufficiently regular solution of problem \eqref{bf1}.
   Then, the following estimate
holds:
\begin{equation}\label{1.dt}
\|\Dt u\|_{L^1([t,t+1],H^{-2})}\le Q(\|u(0)\|_{L^2})e^{-\alpha
t}+Q(\|g\|_{L^2}),
\end{equation}
where the monotone function $Q$ and the constant $C$ are independent of $t$ and $u$.
\end{corollary}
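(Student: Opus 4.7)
The plan is to test the evolution equation against an $H^2$ function and use duality to eliminate the pressure. For any divergence-free $\varphi\in H^2$, taking the $L^2$ pairing of \eqref{bf1} with $\varphi$ and using $\divv\varphi=0$ together with $\varphi|_{\partial\Om}=0$, the pressure term $(\Nx p,\varphi)=-(p,\divv\varphi)$ vanishes identically, and one integration by parts moves the Laplacian onto $\varphi$:
\begin{equation*}
\langle \Dt u(s),\varphi\rangle = (u(s),\Dx\varphi) - (f(u(s)),\varphi) + (g,\varphi).
\end{equation*}
Taking the supremum over $\varphi$ with $\|\varphi\|_{H^2}\le 1$ yields the pointwise-in-time estimate
\begin{equation*}
\|\Dt u(s)\|_{H^{-2}} \le C\bigl(\|u(s)\|_{L^2} + \|f(u(s))\|_{L^1} + \|g\|_{L^2}\bigr),
\end{equation*}
where the decisive use of dimension three is the embedding $H^2(\Om)\hookrightarrow L^\infty(\Om)$, which dualizes to $L^1\hookrightarrow H^{-2}$ and permits controlling $f(u)$ merely in its $L^1$ norm.

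Next, I would invoke the growth assumption: condition 2) of \eqref{0.fcond} integrates to $|f(u)|\le C(1+|u|^r)$, so H\"older on the bounded domain $\Om$ gives $\|f(u(s))\|_{L^1}\le C(1+\|u(s)\|_{L^{r+1}}^r)$. Integrating in $s$ over $[t,t+1]$ and applying H\"older in time,
\begin{equation*}
\int_t^{t+1}\|u(s)\|_{L^{r+1}}^r\,ds \le \Bigl(\int_t^{t+1}\|u(s)\|_{L^{r+1}}^{r+1}\,ds\Bigr)^{r/(r+1)}.
\end{equation*}
Combining with the trivial bound $\int_t^{t+1}\|u(s)\|_{L^2}\,ds\le \sup_{s\in[t,t+1]}\|u(s)\|_{L^2}$, the desired estimate \eqref{1.dt} falls out of Lemma \ref{Lem1.en}, since \eqref{1.en} already dominates both $\sup_{s\in[t,t+1]}\|u(s)\|_{L^2}^2$ and $\int_t^{t+1}\|u(s)\|_{L^{r+1}}^{r+1}\,ds$ by an expression of the form $Q(\|u(0)\|_{L^2})e^{-\alpha t}+Q(\|g\|_{L^2})$.

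The main conceptual point, and the reason for choosing the negative index $-2$ rather than $-1$, is precisely the need to accommodate the arbitrarily high polynomial growth of $f$: in $H^{-1}$ one would need Sobolev embeddings that fail for large $r$, whereas $H^{-2}$ exploits the critical three-dimensional embedding $H^2\hookrightarrow L^\infty$ and thereby absorbs $f(u)$ through its $L^1$ norm, which is in turn controlled by the spatio-temporal $L^{r+1}$ norm furnished by the energy estimate. Beyond this observation the proof is routine bookkeeping: the pressure is killed by the divergence-free test, the Laplacian contributes only $\|u\|_{L^2}$ after two integrations by parts, and the forcing $g$ together with the additive constant pieces feed directly into $Q(\|g\|_{L^2})$.
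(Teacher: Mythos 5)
Your proposal is correct and follows essentially the same route as the paper: the pressure is eliminated through the divergence-free structure, $f(u)$ is controlled via the energy estimate \eqref{1.en} and a H\"older argument, and the three-dimensional embedding $H^2\subset L^\infty$ (dually, $L^1\subset H^{-2}$) converts this into the $H^{-2}$ bound. The only cosmetic difference is that the paper applies the Leray projector $\Pi$ to \eqref{bf1} and invokes its boundedness on $L^{r^*}$, $r^*=(r+1)/r$, together with the embedding $L^{r^*}\subset H^{-2}$, whereas you compute the dual norm directly against divergence-free test functions, which lets you measure $f(u)$ merely in $L^1$ and bypasses the $L^p$-theory of the Helmholtz projection altogether.
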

\begin{proof} Indeed, applying the Helmholz-Leray projector $\Pi$ to both sides of equation \eqref{bf1} and using that $\divv\Dt u=0$,
we arrive at
\begin{equation}\label{1.nopres}
\Dt u=Au-\Pi f(u)+\Pi g.
\end{equation}
Thanks to the growth restriction on $f$ and the control \eqref{1.en}, we have
$$
\|f(u)\|_{L^{r^*}([t,t+1],L^{r^*})}^r\le C\|u(0)\|^2_{L^2}e^{-\alpha t}+C(1+\|g\|^2_{L^2})
$$
with $r^*:=\frac{r+1}r$. Using now that the Helmholz-Leray projector $\Pi: L^{r^*}\to L^{r^*}$ together with the embedding
$L^{r^*}\subset H^{-2}$ (recall that $n=3$), we arrive at
$$
\|\Pi f(u)\|_{L^1([t,t+1],H^{-2})}\le Q(\|u(0)\|_{L^2})e^{-\alpha t}+Q(\|g\|_{L^2})
$$
for some monotone increasing function $Q$. This estimate, together with \eqref{1.nopres} and the control of $u$ given by the
energy estimate \eqref{1.en} give the desired estimate \eqref{1.dt} and finish the proof of the corollary.
\end{proof}

Let us now differentiate \eqref{bf1} with respect to time and denote
$v=\Dt u$. Then, this function solves
\begin{equation}\label{1.dteq}
\Dt v=\Dx v-f'(u)v+\Nx q,\ \ \divv v=0,\ \ v(0)=Au(0)-\Pi f(u(0))+\Pi g.
\end{equation}
Moreover, using the embedding $H^2\subset C$, we see that
\begin{equation}\label{1.dtini}
\|v(0)\|_{L^2}\le Q(\|u(0)\|_{H^2})+\|g\|_{L^2}
\end{equation}
and, therefore, the $L^2$-norm of the initial data for $v$ is under the control if $u(0)\in H^2$.
\par
The next Lemma gives the control of $v(t)$ for all $t\ge0$.

\begin{lemma}\label{Lem1.dten} Let $(u,p)$ be a sufficiently regular solution of problem \eqref{bf1}. Then, the following estimate holds:
\begin{equation}\label{1.dten}
\|v(t)\|^2_{L^2}+\int_t^{t+1}\|v(s)\|^2_{H^1}\,ds\le Q(\|u(0)\|_{H^2})e^{K t}+Q(\|g\|^2_{L^2})
\end{equation}
for some positive constant $K$ and monotone function $Q$.
\end{lemma}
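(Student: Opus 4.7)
The plan is to multiply the linearized equation (1.dteq) by $v$ and integrate over $\Omega$, then apply Gronwall. The key observation is that $v=\Dt u$ inherits the good boundary and divergence structure from $u$: since $u|_{\partial\Omega}=0$ for all $t$, we have $v|_{\partial\Omega}=0$, and $\divv v=0$ is given. Consequently, $(\Nx q, v)=(q,\divv v)=0$ and the pressure term drops out, while integration by parts yields $-(\Dx v, v)=\|\Nx v\|_{L^2}^2$.

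Using assumption (0.fcond)-1) pointwise,
\begin{equation*}
\int_\Om f'(u)v.v\,dx \ge -K\|v\|^2_{L^2}+\kappa\int_\Om |u|^{r-1}|v|^2\,dx,
\end{equation*}
so the standard energy identity becomes the differential inequality
\begin{equation*}
\frac12\Dt\|v\|^2_{L^2}+\|\Nx v\|^2_{L^2}+\kappa\int_\Om|u|^{r-1}|v|^2\,dx\le K\|v\|^2_{L^2}.
\end{equation*}
Dropping the (nonnegative) weighted term and applying the Gronwall lemma gives the pointwise bound $\|v(t)\|^2_{L^2}\le \|v(0)\|^2_{L^2}e^{2Kt}$; integrating the inequality from $t$ to $t+1$ then controls $\int_t^{t+1}\|\Nx v\|_{L^2}^2\,ds$ in terms of $\|v(t)\|^2_{L^2}$ and $\int_t^{t+1}\|v\|^2_{L^2}\,ds$, both of which are already dominated by the same exponential.

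Finally, the initial data $v(0)=Au(0)-\Pi f(u(0))+\Pi g$ is bounded in $L^2$ by $Q(\|u(0)\|_{H^2})+\|g\|_{L^2}$ thanks to (1.dtini) (this uses the embedding $H^2\subset L^\infty$ to handle $f(u(0))$). Substituting into the Gronwall bound produces the stated estimate, with the $Q(\|g\|^2_{L^2})$ contribution appearing from $\|v(0)\|^2_{L^2}$ via the $\Pi g$ term.

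The proof is essentially routine once the structural observations are made. The only point requiring care is verifying that the pressure term genuinely vanishes, which depends on having $v$ simultaneously divergence-free and vanishing on $\partial\Om$ (hence the need for a ``sufficiently regular'' solution so that $\Dt u$ makes sense as a trace-zero $H^1$ function). Note that the monotonicity of $f$ is exactly what saves us: the weighted $L^2$ term with weight $|u|^{r-1}$ has a favorable sign and can simply be discarded, so the polynomial growth exponent $r$ never enters the constants in an essential way. The exponential $e^{Kt}$ here is unavoidable at this stage and will be replaced by a dissipative bound only later, after the maximal regularity theorem from the Appendix is invoked.
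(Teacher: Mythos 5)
Your proof is correct and takes essentially the same route as the paper: multiply the differentiated equation \eqref{1.dteq} by $v$, observe that the pressure term vanishes, use the quasi-monotonicity bound $f'(u)v.v\ge -K|v|^2$ from \eqref{0.fcond} to obtain the differential inequality \eqref{1.dtgr}, and conclude by Gronwall combined with the initial-data bound \eqref{1.dtini}. The only cosmetic difference is that you retain the favorable term $\kappa\int_\Omega|u|^{r-1}|v|^2\,dx$ before discarding it, whereas the paper drops it immediately by using the weaker pointwise lower bound.
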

\begin{proof} Multiplying equation \eqref{1.dteq} by $v(t)$, integrating over $\Omega$ and using that
$(f'(u)v)\cdot v\ge-K|v|^2, \forall u,v \in \R^3$ (see the condition
\eqref{0.fcond}), we arrive at
\begin{equation}\label{1.dtgr}
\Dt \|v(t)\|^2_{L^2}+\|v(t)\|_{H^1}^2\le 2K\|v(t)\|^2_{L^2}.
\end{equation}
Applying the Gronwall inequality to this estimate, we arrive at \eqref{1.dten} and finish the proof of the lemma.
\end{proof}

\begin{corollary} Let $(u,p)$ be a sufficiently smooth solution of the problem \eqref{bf1}. Then, the following estimate
holds:
\begin{equation}\label{1.h2div}
\|u(t)\|_{H^2}+\|\Nx p(t)\|_{L^2}\le Q(\|u(0)\|_{H^2})e^{Kt}+Q(\|g\|_{L^2})
\end{equation}
for some positive constant $K$ and monotone function $Q$ independent of $t$ and $u_0$.
\end{corollary}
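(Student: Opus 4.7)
The plan is to freeze the time variable and view the Brinkman--Forchheimer equation at each instant $t$ as a semi-linear stationary Stokes problem whose right-hand side is controlled by the estimates already obtained. Concretely, rewriting \eqref{bf1} as
\begin{equation*}
-\Delta u(t)+f(u(t))+\nabla p(t)=g-\Dt u(t),\qquad \divv u(t)=0,\qquad u(t)\big|_{\partial\Omega}=0,
\end{equation*}
puts $u(t)$ in exactly the form covered by the maximal regularity theorem for the stationary Brinkman--Forchheimer problem proved in the Appendix. That theorem delivers a bound of the form
\begin{equation*}
\|u(t)\|_{H^2}+\|\nabla p(t)\|_{L^2}\le Q\bigl(\|g-\Dt u(t)\|_{L^2}\bigr)
\end{equation*}
with a monotone $Q$ independent of $t$.

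The next step is to control $\|g-\Dt u(t)\|_{L^2}$. The $L^2$-norm of $g$ is given, and Lemma \ref{Lem1.dten} provides the pointwise-in-time bound
\begin{equation*}
\|\Dt u(t)\|_{L^2}=\|v(t)\|_{L^2}\le Q(\|u(0)\|_{H^2})e^{Kt/2}+Q(\|g\|_{L^2}).
\end{equation*}
Combining this with the bound from the stationary maximal regularity result (and absorbing all monotone functions into a single $Q$) yields \eqref{1.h2div}.

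The only nontrivial ingredient in this argument is the maximal regularity estimate for the stationary problem; this is precisely where the nonlinear localization technique is needed, because of the Dirichlet boundary conditions and the arbitrary polynomial growth of $f$. That estimate is established separately in the Appendix, so here it suffices to invoke it. Everything else is a direct substitution of the already proven time-dependent bounds into the stationary estimate, so no further a priori calculation is required to finish the proof.
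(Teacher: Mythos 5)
Your proposal is correct and follows essentially the same route as the paper: the paper also freezes time, rewrites \eqref{bf1} as the elliptic problem \eqref{1.ell} with right-hand side $g_u(t):=-g+\Dt u(t)$, applies the maximal regularity Theorem \ref{mr} from the Appendix (implicitly combined with Lemma \ref{Lem-simple}, which bounds $\|(u(t),p(t))\|_{\mathcal F_q}$ by the $L^2$-norm of $g_u(t)$, so that the estimate takes the form you quote), and then controls $\|\Dt u(t)\|_{L^2}$ pointwise in time via \eqref{1.dten}.
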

Indeed, due to the control \eqref{1.dten}, we may rewrite equation \eqref{bf1} as an elliptic boundary value problem
\begin{equation}\label{1.ell}
\Dx w(t)-f(w(t))+\Nx p(t)=g_u(t):=-g+\Dt u(t)
\end{equation}
and apply the maximal regularity result of Theorem \ref{mr} (see Appendix) to that equation. Together with \eqref{1.dten} this
gives indeed estimate \eqref{1.h2div} and proves the corollary.
\par
We, however, note that the proved estimate \eqref{1.h2div} is {\it
divergent} as $t\to\infty$ and, by that reason, is not sufficient to
verify the {\it dissipativity} of the problem \eqref{bf1} in $H^2$.
In order to overcome this drawback, we need the $L^2\to H^2$
smoothing property for the solutions of \eqref{bf1}. This result
will be obtained exploiting the parabolic smoothing for equation
\eqref{1.dteq} together with the already established control
\eqref{1.dt} for $v(t)=\Dt u(t)$.

\begin{lemma}\label{Lem1.dtsm} Let $(u,p)$ be a sufficiently regular solution of the problem \eqref{bf1}. Then, the following
estimate holds:
\begin{equation}\label{1.dtsm}
\|\Dt u(t)\|_{L^2}\le \frac{1+t^3}{t^3}\(Q(\|u(0)\|_{L^2})e^{-\alpha t}+Q(\|g\|_{L^2})\),\ \ t>0,
\end{equation}
where the positive constant $\alpha$ and the monotone function $Q$ are independent of $t$ and $u$.
\end{lemma}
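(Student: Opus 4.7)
The plan is to derive \eqref{1.dtsm} by combining the differential inequality \eqref{1.dtgr} for $v=\Dt u$ with the $L^1_tH^{-2}$ control from \eqref{1.dt}, via an interpolation-based weighted Gronwall argument that realises the parabolic smoothing anticipated in the remark preceding the statement.

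Concretely, in $\R^3$ the interpolation identity $L^2=[H^{-2},H^1]_{2/3}$ combined with Young's inequality (with conjugate exponents $3$ and $3/2$) yields
$$\|v\|_{L^2}^2\le \kappa\|v\|_{H^1}^2+C\kappa^{-2}\|v\|_{H^{-2}}^2\qquad(\kappa>0).$$
Multiplying \eqref{1.dtgr} by $t^3$ and rearranging gives
$$\Dt(t^3\|v\|^2)+t^3\|v\|_{H^1}^2\le (3t^2+2Kt^3)\|v\|^2,$$
and inserting the interpolation with the time-dependent choice $\kappa=t/(2(3+2K))$ (so that the $\kappa^{-2}=Ct^{-2}$ factor exactly balances the $(3t^2+2Kt^3)$ prefactor on $\|v\|^2$) absorbs the $\|v\|_{H^1}^2$-term and leaves, for $t\in(0,1]$,
$$\Dt(t^3\|v\|^2)+\tfrac12 t^3\|v\|_{H^1}^2\le C\|v\|_{H^{-2}}^2.$$
Integration from $0$ to $t$, together with $t^3\|v(t)\|^2\to0$ as $t\to0^+$, produces the key master inequality
$$t^3\|v(t)\|_{L^2}^2\le C\int_0^t\|v(s)\|_{H^{-2}}^2\,ds.$$

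It remains to estimate the right-hand side, using only the $L^1$-in-time control \eqref{1.dt}. For this one first upgrades \eqref{1.dt} to a pointwise bound on $\|v(t)\|_{H^{-2}}$ by testing the projected equation \eqref{1.dteq} with the smooth divergence-free function $A^{-2}v$, which gives the identity
$$\tfrac12\Dt\|v\|_{H^{-2}}^2+\|v\|_{H^{-1}}^2=-\bigl(f'(u)v,\,A^{-2}v\bigr),$$
whose right-hand side is controlled via the three-dimensional embedding $H^4\hookrightarrow L^\infty$ for $A^{-2}v$, the growth condition \eqref{0.fcond}, and the energy estimate \eqref{1.en} for $u$. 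A mean-value argument (for each $t>0$ one finds $s\in(0,t/2)$ with $\|v(s)\|_{H^{-2}}\le 2Q/t$ via \eqref{1.dt}) propagates this control forward in time through the $H^{-2}$ differential inequality just derived, producing a pointwise bound of the form $\|v(t)\|_{H^{-2}}\le \tfrac{1+t}{t}\bigl(Q(\|u(0)\|_{L^2})e^{-\alpha t}+Q(\|g\|_{L^2})\bigr)$. Combined with \eqref{1.dt} this bounds $\int_0^t\|v\|_{H^{-2}}^2\,ds$, and substitution into the master inequality proves \eqref{1.dtsm} on $(0,1]$; the analogous argument applied on the shifted interval $[t-1,t]$ (where the $t^3$ weight becomes bounded) yields a uniform bound for $t\ge1$, and the two combined give the factor $(1+t^3)/t^3$.

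The principal technical difficulty is controlling the bilinear pairing $(f'(u)v,A^{-2}v)$ uniformly in the polynomial exponent $r\ge1$. The natural H\"older estimate $\|f'(u)v\|_{L^1}\le\|f'(u)\|_{L^{(r+1)/r}}\|v\|_{L^{r+1}}$ requires $v\in L^{r+1}$, a Lebesgue regularity that fails to follow from the Sobolev embedding $H^1\hookrightarrow L^6$ as soon as $r>5$. The way out is to place the excess derivatives on $A^{-2}v$, which lies in $H^4\subset L^\infty(\R^3)$ with $\|A^{-2}v\|_{L^\infty}\le C\|v\|_{L^2}$, and to redistribute the remaining Lebesgue exponents between $u$ and $v$ through a carefully chosen three-factor H\"older inequality, closing the estimate with only the $L^{r+1}$-regularity of $u$ and the $L^2\cap H^1$-regularity of $v$ that \eqref{1.en} and \eqref{1.dtgr} provide.
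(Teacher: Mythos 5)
Your first half is sound and is in fact close in spirit to the paper's own proof: the paper likewise multiplies \eqref{1.dtgr} by a power of $t$ and uses the interpolation $\|v\|_{L^2}\le C\|v\|_{H^{-2}}^{1/3}\|v\|_{H^1}^{2/3}$. But your master inequality $t^3\|v(t)\|^2_{L^2}\le C\int_0^t\|v(s)\|^2_{H^{-2}}\,ds$ ends with the \emph{square} of the $H^{-2}$-norm under the time integral, i.e. with an $L^2$-in-time quantity, whereas \eqref{1.dt} controls only the $L^1$-in-time norm of $\|v\|_{H^{-2}}$. This mismatch is exactly what the paper's bookkeeping is engineered to avoid: there one takes the weight $t^N$ with $N=6$, writes $\int_0^t s^{N-1}\|v\|^2_{L^2}\,ds\le\sup_s\{s^{N/2}\|v(s)\|_{L^2}\}\int_0^t s^{N/2-1}\|v(s)\|_{L^2}\,ds$, and interpolates only the \emph{single} factor of $\|v\|_{L^2}$ inside the integral; after Young's inequality the $H^{-2}$-norm then enters linearly, as $\bigl(\int_0^t s^{N/2-3}\|v(s)\|_{H^{-2}}\,ds\bigr)^2$ with $N/2-3=0$, which is precisely the quantity \eqref{1.dt} provides. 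This bookkeeping is also what dictates the rate $t^{-3}$ in \eqref{1.dtsm}; your $t^3$ weight aims at the stronger rate $t^{-3/2}$, which the available information does not yield.

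The repair you propose — a pointwise bound on $\|v(t)\|_{H^{-2}}$ via testing \eqref{1.dteq} with $A^{-2}v$ — hits a structural obstruction, not a technicality. In the pairing $(f'(u)v,A^{-2}v)$, the only control on $f'(u)$ furnished by the energy estimate \eqref{1.en} is $\|f'(u)\|_{L^{(r+1)/(r-1)}}\le C(1+\|u\|_{L^{r+1}}^{r-1})$, and a three-factor H\"older estimate needs exponents with $\frac{r-1}{r+1}+\frac1b+\frac1c=1$, where $v\in L^b$ and $A^{-2}v\in L^c$. Even in the most favorable placement $b=6$ (the full $H^1$-regularity of $v$) and $c=\infty$ (your $H^4\subset L^\infty$ bound), the sum is $\frac{r-1}{r+1}+\frac16>1$ as soon as $r>11$, so no choice of exponents closes the estimate for fast-growing $f$; this is exactly the obstruction flagged in Remark \ref{Rem1.bad}, namely that negative-order multipliers are incompatible with a nonlinearity of arbitrary growth. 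Moreover, even for $3<r\le 11$ the resulting bound has the form $\|f'(u)\|_{L^{(r+1)/(r-1)}}\|v\|_{H^1}\|v\|_{H^{-2}}$: the factor $\|v\|_{H^1}$ cannot be absorbed by the dissipation $\|v\|^2_{H^{-1}}$ of your $H^{-2}$-identity, and its time integral is controlled only through \eqref{1.dten}, i.e. in terms of $\|u(0)\|_{H^2}$ with $e^{Kt}$ growth — using it would destroy precisely the smoothing character (dependence on $\|u(0)\|_{L^2}$ alone) that Lemma \ref{Lem1.dtsm} asserts. So the gap is genuine: the way out is not a pointwise $H^{-2}$ bound but the asymmetric splitting described above, which never lets the nonlinearity reappear after \eqref{1.dtgr}.
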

\begin{proof} We first note that, due to the energy estimate \eqref{1.en}, it is sufficient to verify \eqref{1.dtsm}
for $t\in(0,1]$ only.  To this end, we multiply \eqref{1.dtgr} by
$t^N$ (where the exponent $N$ will be specified later) and integrate
with respect to $t$. Then, we have
\begin{equation}\label{1.dtsm1}
\sup_{s\in[0,t]}\left\{s^N\|v(s)\|^2_{L^2}\right\}+\int_0^t s^N\|v(s)\|^2_{H^1}\,ds\le C\int_0^t s^{N-1}\|v(s)\|^2_{L^2}:=I(t),
 \end{equation}
where $C=C(N,K)$ is independent of $t$ and $u$.
\par
We estimate $I(t)$ using \eqref{1.dt} and the interpolation
inequality $\|v\|_{L^2}\le C\|v\|_{H^{-2}}^{1/3}\|v\|_{H^1}^{2/3}$:
\begin{multline}\label{1.est}
I(t)\le C\sup_{s\in[0,t]}\left\{s^{N/2}\|v(s)\|_{L^2}\right\}\int_0^ts^{N/2-1}\|v(s)\|_{L^2}\,ds\le
 \\\le \sup_{s\in[0,t]}\left\{s^{N/2}\|v(s)\|_{L^2}\right\}\int_0^t(s^{N/2}
 \|v(s)\|_{H^1})^{2/3}(s^{N/2-3}\|v(s)\|_{H^{-2}})^{1/3}\,ds
\le\\\le 1/2\sup_{s\in[0,t]}\left\{s^{N}\|v(s)\|_{L^2}^2\right\}+
1/2\int_0^t s^N\|v(s)\|^2_{H^1}\,ds+\\+C'\(\int_0^t
s^{N/2-3}\|v(s)\|_{H^{-2}}\,ds\)^2.
\end{multline}
Fixing now $N=6$, using the control \eqref{1.dt} in order to estimate the right-hand side of \eqref{1.est} and inserting
it into the right-hand side of \eqref{1.dtsm1}, we see that
\begin{equation}\label{1.est2}
\sup_{s\in[0,t]}\left\{s^{6}\|v(s)\|_{L^2}^2\right\}\le
1/2\sup_{s\in[0,t]}
\left\{s^{6}\|v(s)\|_{L^2}^2\right\}+Q(\|u(0)\|_{L^2})+Q(\|g\|_{L^2})
\end{equation}
(recall, we have assumed that $t\le1$).
It only remains to note that \eqref{1.est2} immediately gives \eqref{1.dtsm} for
$t\le1$. Lemma \ref{Lem1.dtsm} is proved.
\end{proof}
We summarize the obtained estimates in the following theorem.
\begin{theorem}\label{Th1.h2dis} Let $(u,p)$ be a sufficiently regular solution of the problem \eqref{bf1}.
Then, the following
estimate holds:
\begin{equation}\label{1.h2en}
\|u(t)\|_{H^2}+\|\Nx p(t)\|_{H^1}\le Q(\|u(0)\|_{H^2})e^{-\alpha t}+Q(\|g\|_{L^2}),
\end{equation}
where the positive constant $\alpha$ and a monotone function $Q$ are
independent of $t$ and $u$. Moreover, the following smoothing
property is valid:
\begin{equation}\label{1.h2sm}
\|u(t)\|_{H^2}+\|\Nx p(t)\|_{L^2}\le Q\(\frac{1+t^3}{t^3}\|u(0)\|_{L^2}\)e^{-\alpha t}+Q(\|g\|_{L^2}),\ \ t>0.
\end{equation}
\end{theorem}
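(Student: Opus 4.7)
The plan is to combine the smoothing estimate for $\Dt u$ from Lemma \ref{Lem1.dtsm} with the elliptic maximal regularity theorem of the Appendix. The smoothing bound \eqref{1.h2sm} comes first and almost directly; the dissipative bound \eqref{1.h2en} is then obtained by gluing two time regimes.

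For \eqref{1.h2sm}, I would fix $t>0$ and rewrite \eqref{bf1} as the stationary elliptic system
\begin{equation*}
-\Dx u(t) + f(u(t)) + \Nx p(t) = g - \Dt u(t), \quad \divv u(t)=0, \quad u(t)\big|_{\partial\Omega}=0,
\end{equation*}
treating $g_u(t):=g-\Dt u(t)$ as $L^2$-data. The maximal regularity theorem then yields $\|u(t)\|_{H^2}+\|\Nx p(t)\|_{L^2}\le Q(\|g_u(t)\|_{L^2})$, and substituting the $L^2$-bound on $\Dt u$ furnished by Lemma \ref{Lem1.dtsm} converts this into \eqref{1.h2sm} (with the same singular prefactor $(1+t^3)/t^3$).

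For \eqref{1.h2en}, I would split into $t\le 1$ and $t\ge 1$. On $[0,1]$, the already-proved (but $t$-divergent) estimate \eqref{1.h2div} gives the uniform bound $Q(\|u(0)\|_{H^2})e^{K}+Q(\|g\|_{L^2})$, which fits the form of \eqref{1.h2en} after absorbing the constant $e^K$ into $Q$. On $t\ge 1$, the singular factor $(1+t^3)/t^3$ in \eqref{1.h2sm} is at most $2$; using $\|u(0)\|_{L^2}\le C\|u(0)\|_{H^2}$ then yields exactly the dissipative form $Q(\|u(0)\|_{H^2})e^{-\alpha t}+Q(\|g\|_{L^2})$. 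Concatenating the two regimes (and adjusting constants) produces \eqref{1.h2en}.

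The main difficulty is actually already behind us: the delicate step was Lemma \ref{Lem1.dtsm}, where one bootstraps the $L^1 H^{-2}$-bound on $\Dt u$ from Corollary \ref{Cor1.dt} through interpolation against the parabolic $H^1$-gain for $v=\Dt u$ in \eqref{1.dteq}. Once $\Dt u$ is $L^2$-controlled, the elliptic theorem of the Appendix cleanly decouples temporal and spatial regularity. The only minor point is the $\|\Nx p\|_{H^1}$-norm in \eqref{1.h2en} versus $\|\Nx p\|_{L^2}$ in \eqref{1.h2sm}: to promote the pressure one derivative, one applies the same maximal regularity scheme to the spatially differentiated equation, using that $\Dt u\in L^2([t,t+1];H^1)$ via \eqref{1.dten} together with the already-obtained $H^2$-control of $u$ to bound $\Nx(f(u))$ and $\Nx\Dt u$ in $L^2$.
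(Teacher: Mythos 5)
Your argument for the two displayed estimates coincides with the paper's own proof: \eqref{1.h2sm} is obtained exactly as you describe, by applying the maximal regularity Theorem \ref{mr} to the elliptic form \eqref{1.ell} of the equation with data $g_u(t)=g-\Dt u(t)$ and invoking Lemma \ref{Lem1.dtsm}, and \eqref{1.h2en} follows from the same two-regime gluing (the divergent bound \eqref{1.h2div} on $t\le1$, the smoothing bound \eqref{1.h2sm} on $t\ge1$).

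The only place where you go beyond the paper is the final step, where you try to upgrade the pressure term from $\|\Nx p\|_{L^2}$ to $\|\Nx p\|_{H^1}$, and that step does not work as described. Applying the maximal regularity scheme to the spatially differentiated equation requires $\Nx g\in L^2$, which is not available since $g$ is only assumed to lie in $L^2$; moreover, \eqref{1.dten} controls $\Dt u$ only in $L^2([t,t+1];H^1)$, which gives no pointwise-in-time bound on $\|\Nx\Dt u(t)\|_{L^2}$, so the right-hand side of the differentiated system is not in $L^2$ at the fixed time $t$. In fact, with $g\in L^2$ one cannot expect $p(t)\in H^2$ at all: linear Stokes theory yields $u\in H^2$, $p\in H^1$ from $L^2$ data and no more. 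The norm $\|\Nx p(t)\|_{H^1}$ in \eqref{1.h2en} appears to be a misprint in the statement --- Theorem \ref{mr} itself produces only $\|p\|_{H^1}$, and the paper's proof (otherwise identical to yours) yields only that --- so the consistent reading is $\|p(t)\|_{H^1}$, i.e.\ the same pressure norm as in \eqref{1.h2sm}, and with that reading your proof is complete; no extra differentiation argument is needed or possible at this level of regularity of $g$.
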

Indeed, the estimate \eqref{1.h2sm} is an immediate corollary of
\eqref{1.dtsm} and the maximal elliptic regularity of Theorem
\ref{mr} applied to the elliptic equation \eqref{1.ell}. In order to
verify \eqref{1.h2en}, it is sufficient to use the {\it divergent}
in time estimate \eqref{1.h2div} for $t\le1$ and estimate
\eqref{1.h2sm} for $t\ge1$.

\section{Well-posedness and attractors}\label{s2}
The estimates obtained in the previous section, allow us to prove
the existence and uniqueness of a solution of the problem
\eqref{bf1} as well as to establish existence of the global
attractor for the associated semigroup.
 We start with the definition of a weak solution of that equation excluding the pressure in a standard way.

\begin{definition}\label{Def2.weak} A function
\begin{equation}\label{2.regweak}
u\in C([0,\infty),H)\cap L^2_{loc}([0,\infty),H^1)\cap L^{r+1}_{loc}([0,\infty),L^{r+1}(\Omega))
\end{equation}
is called a weak solution of \eqref{bf1} if it satisfies
\eqref{1.nopres} in the sense of distributions, i.e.,
$$
-\int_\R(u(t),\Dt\varphi(t))\,dt=-\int_\R(\Nx u(t),\Nx\varphi(t))-(f(u(t)),\varphi(t))+(g,\varphi(t))\,dt
$$
for all $\varphi\in C_0^\infty(\R_+\times\Omega)$ such that $\divv\varphi(t)\equiv0$.
\end{definition}

The next lemma establishes the uniqueness of a weak solution.

\begin{lemma}\label{Lem2.uni} Let the nonlinearity $f$ satisfy assumptions \eqref{0.fcond}. Then, the weak solution of problem
 \eqref{bf1} is unique.
 Moreover, for any two solutions $u_1(t)$ and $u_2(t)$ (with different initial data) of the  equation \eqref{bf1}, the following
estimate holds:
\begin{equation}\label{2.lip}
\|u_1(t)-u_2(t)\|_{L^2}\le e^{(K-\lambda_1)t}\|u_1(0)-u_2(0)\|_{L^2},
\end{equation}
where $K$ is the same as in \eqref{0.fcond} and $\lambda_1>0$ is the first eigenvalue of the  operator $A$.
\end{lemma}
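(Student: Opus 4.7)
The plan is to work with the difference $w(t):=u_1(t)-u_2(t)$ and deduce a differential inequality for $\|w\|_{L^2}^2$ that, after Gronwall, yields the required Lipschitz bound. Because $w$ is divergence free and vanishes on $\partial\Omega$, subtracting the two equations and applying the Helmholz-Leray projector kills the pressure and gives
\begin{equation*}
\Dt w = \Delta w - \Pi\bigl(f(u_1)-f(u_2)\bigr),\qquad \divv w=0,\qquad w|_{\partial\Omega}=0.
\end{equation*}
The heart of the estimate is to test with $w$ itself. The Laplacian term produces $\|\Nx w\|_{L^2}^2$, which I will bound below by $\lambda_1\|w\|_{L^2}^2$ via Poincar\'e (equivalently, via the spectral property of $A$). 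For the nonlinear term I will use the mean value identity
\begin{equation*}
f(u_1)-f(u_2)=\Bigl(\int_0^1 f'\bigl(\tau u_1+(1-\tau)u_2\bigr)\,d\tau\Bigr)w,
\end{equation*}
combined with the quasi\nobreakdash-monotonicity bound $f'(u)v\cdot v\ge -K|v|^2$ from \eqref{0.fcond}, to get $(f(u_1)-f(u_2),w)\ge -K\|w\|_{L^2}^2$. Putting the pieces together gives
\begin{equation*}
\tfrac12\Dt\|w(t)\|_{L^2}^2 + \lambda_1\|w(t)\|_{L^2}^2 \le K\|w(t)\|_{L^2}^2,
\end{equation*}
and Gronwall delivers \eqref{2.lip}. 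Uniqueness is then the special case $u_1(0)=u_2(0)$.

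The only genuine obstacle is to justify the energy identity at the level of regularity \eqref{2.regweak}: a weak solution a priori has $\Dt w$ only in some negative-order space, so the step $\tfrac12\Dt\|w\|^2=(\Dt w,w)$ must be legitimized. I will argue that, because $w\in L^2_{loc}(H^1)\cap L^{r+1}_{loc}(L^{r+1})$ and, from the equation, $\Dt w\in L^2_{loc}(H^{-1})+L^{r^*}_{loc}(L^{r^*})$, the function $w$ is an admissible test function in its own dual pairing (a standard Lions-type lemma), so that $t\mapsto \|w(t)\|_{L^2}^2$ is absolutely continuous with the expected derivative. Note that the nonlinear pairing $(f(u_1)-f(u_2),w)$ makes sense by H\"older since $f(u_i)\in L^{r^*}$ and $w\in L^{r+1}$.

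Once this is in place, the differential inequality above holds for a.e. $t$, and integrating yields the stated bound
\begin{equation*}
\|u_1(t)-u_2(t)\|_{L^2}^2\le e^{2(K-\lambda_1)t}\|u_1(0)-u_2(0)\|_{L^2}^2,
\end{equation*}
which is exactly \eqref{2.lip} after taking square roots. Uniqueness of weak solutions with a prescribed initial datum is the immediate consequence $w\equiv 0$ whenever $w(0)=0$. I expect the authors' proof to be essentially this one-liner, with the regularity justification either tacit or absorbed into the definition of a ``sufficiently regular'' solution.
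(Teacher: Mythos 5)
Your proposal is correct and follows essentially the same route as the paper: subtract the equations, project out the pressure, test with the difference $w$, use the quasi-monotonicity $f'(u)v\cdot v\ge -K|v|^2$ (the paper states the resulting pointwise inequality directly, you derive it via the mean value identity, which is the same thing), bound $-(\Delta w,w)$ below by $\lambda_1\|w\|_{L^2}^2$, and apply Gronwall. Even your justification of the energy identity matches the paper, which likewise observes that $\Dt w$ lies in $[L^2([0,T],H^1)\cap L^{r+1}([0,T],L^{r+1})]^*$ so that $t\mapsto\|w(t)\|_{L^2}^2$ is absolutely continuous with the expected derivative.
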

\begin{proof} Let $u_1(t)$ and $u_2(t)$ be two different energy solutions of \eqref{bf1} and let $v(t):=u_1(t)-u_2(t)$. Then,
this function solves:
\begin{equation}\label{2.dif}
\Dt v= Av -\Pi(f(u_1)-f(u_2)),\ \ v(0)=u_1(0)-u_2(0).
\end{equation}
Note that, due to the regularity \eqref{2.regweak} of a weak solution and the growth restrictions on $f$, all terms in equation \eqref{2.dif} belong
to the space
$$
L^2([0,T],H^{-1})+L^{1+1/r}([0,T],L^{1+1/r}(\Omega))=[L^2([0,T],H^1)\cap L^{r+1}([0,T],L^{1+r}(\Omega))]^*.
$$
In particular, the function $t\to \|u(t)\|^2_{H}$ is absolutely continuous and
$$
\frac d{dt}\|u(t)\|_{L^2}^2=2(\Dt u(t),u(t)).
$$
Multiplying now equation \eqref{2.dif} by $v(t)$, integrating over
$\Omega$ and using the inequality
$$
(f(u_1)-f(u_2(t)).(u_1-u_2)\ge -K|u_1-u_2|^2,\ \ \forall u_1,u_2\in\R^3
$$
(due to the first assumption of \eqref{0.fcond}), we arrive at
\begin{equation}\label{2.di}
1/2\frac d{dt}\|v(t)\|_{L^2}^2\le K\|v(t)\|^2_{L^2}-(Av(t),v(t))\le (K-\lambda_1)\|v(t)\|^2_{L^2}
\end{equation}
and the Gronwall inequality now gives the uniqueness and estimate \eqref{2.lip}. Lemma \ref{Lem2.uni} is proved.
\end{proof}
We are now able to state our main result on the well-posedness and regularity of solutions of problem \eqref{bf1}.

\begin{theorem}\label{Th.reg} Let the nonlinearity $f$ satisfy assumptions \eqref{0.fcond} and let $g\in L^2(\Omega)$.
Then, for every $u_0\in H$, problem \eqref{bf1} possesses a unique weak solution $u$
 (in the sense of Definition \eqref{Def2.weak}). Moreover, $u(t)\in H^2$ for all $t>0$ and the estimate \eqref{1.h2sm} holds.
  In addition, if $u_0\in H^2$, the estimate \eqref{1.h2en} also holds.
\end{theorem}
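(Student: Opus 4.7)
The plan is to combine a Galerkin approximation with the a priori bounds already established in Section \ref{s1}, to invoke Lemma \ref{Lem2.uni} for uniqueness, and to transfer the regularity estimates \eqref{1.h2en} and \eqref{1.h2sm} from smooth approximations to the limiting weak solution.

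First I would construct approximate solutions $u^N(t)=\sum_{k=1}^N c_k^N(t)e_k$, where $\{e_k\}$ is the orthonormal basis of $H$ of eigenfunctions of the Stokes operator $A$. Projecting \eqref{1.nopres} onto $\mathrm{span}(e_1,\dots,e_N)$ yields a locally Lipschitz ODE system (since $f\in C^2$); the energy calculation of Lemma \ref{Lem1.en}, carried out at the Galerkin level, gives the uniform bound \eqref{1.en} and hence global existence of $u^N$ together with uniform bounds in $L^\infty_{loc}(\R_+;H)\cap L^2_{loc}(\R_+;H^1)\cap L^{r+1}_{loc}(\R_+;L^{r+1})$. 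Corollary \ref{Cor1.dt} then gives a uniform bound on $\Dt u^N$ in $L^1_{loc}(\R_+;H^{-2})$.

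To pass to the limit I would apply the Aubin--Lions lemma with $H^1\subset L^2\subset H^{-2}$ to extract a subsequence converging to some $u$ strongly in $L^2_{loc}(\R_+;L^2)$. Interpolating this strong convergence with the uniform $L^{r+1}$-bound and invoking the growth condition \eqref{0.fcond}(2) shows $f(u^N)\to f(u)$ in $L^{r^*}_{loc}(\R_+;L^{r^*})$ with $r^*=(r+1)/r$; combined with weak convergence in $L^2(H^1)$ this is enough to pass to the limit in the weak formulation of Definition \ref{Def2.weak}. Continuity $u\in C([0,\infty),H)$ then follows from the uniform $\Dt u$-bound by a standard interpolation, and uniqueness and continuous dependence in $H$ are immediate from Lemma \ref{Lem2.uni}.

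The regularity part is where I expect the main difficulty, because the $H^2$-bounds of Section \ref{s1} rely on multiplying the $t$-differentiated equation by $\Dt u$ and then invoking the elliptic maximal regularity of Theorem \ref{mr}, both of which require more smoothness than a general weak solution a priori possesses. My plan is to treat first the case $u_0\in H^2$ by observing that the Galerkin solutions $u^N$ with initial data $P_N u_0$ are smooth in space and time, so that Lemmas \ref{Lem1.dten}, \ref{Lem1.dtsm} and the elliptic step \eqref{1.ell} all apply to them directly and yield bounds uniform in $N$; weak-$*$ lower semicontinuity then transfers \eqref{1.h2en} to $u$. For the smoothing statement with merely $u_0\in H$, I would approximate $u_0$ by $u_0^n\in H^2$ and use \eqref{2.lip} together with the uniform smoothing bound \eqref{1.h2sm} for the resulting strong solutions $u^n$ to pass to the limit. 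The chicken-and-egg step to watch carefully is the application of Theorem \ref{mr} to the elliptic equation \eqref{1.ell} with forcing $g_u(t)=-g+\Dt u(t)$: the fact that $\Dt u(t)\in L^2(\Omega)$ for $t>0$ must be obtained first at the approximation level, from \eqref{1.dtsm} applied to $u^n$, and then preserved under the passage to the limit, after which Theorem \ref{mr} yields the desired control of $\|u(t)\|_{H^2}$ and $\|\Nx p(t)\|_{L^2}$.
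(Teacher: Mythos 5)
Your proposal follows essentially the same route as the paper's proof: existence via Galerkin approximation, uniqueness from Lemma \ref{Lem2.uni}, the key estimates \eqref{1.dten} and \eqref{1.dtsm} for $\Dt u$ derived first at the Galerkin level and transferred to the (unique) limit solution, and finally the elliptic maximal regularity of Theorem \ref{mr} applied to \eqref{1.ell} to obtain \eqref{1.h2sm} and \eqref{1.h2en}. The paper's proof is simply a terser version of this; your explicit handling of the ``chicken-and-egg'' step (securing $\Dt u(t)\in L^2$ at the approximation level before invoking Theorem \ref{mr}) is exactly the point the paper disposes of parenthetically.
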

\begin{proof} Indeed, the existence of a weak solution can be obtained in a standard way using, say, the Galerkin
 approximation method. The uniqueness is proved in Lemma \ref{Lem2.uni}. Thus, we only need to justify the estimates
 \eqref{1.h2sm} and \eqref{1.h2en}. To this end, we note that the estimates \eqref{1.dten} and \eqref{1.dtsm} for
  the differentiated equation \eqref{1.dteq} can be also first obtained on the level of the Galerkin approximations
  and then justified by passing to the limit (remind that the uniqueness of a weak solution holds). Finally,
  rewriting the
  problem \eqref{bf1} in the form of elliptic problem \eqref{1.ell} and using the  Theorem \ref{mr}, we justify the desired
   estimates  \eqref{1.h2sm} and \eqref{1.h2en}. Thus, Theorem \ref{Th.reg} is proved.
\end{proof}
 Thus, under the assumptions of Theorem \ref{Th.reg}, the Brinkman-Forchheimer problem \eqref{bf1} generates
  a dissipative semigroup $S(t)$ in the phase space $H$:
  \begin{equation}\label{2.sem}
  S(t): H\to H,\ S(t)u_0:=u(t),
  \end{equation}
where $u(t)$ solves \eqref{bf1} with $u(0)=u_0$. Our next task is to
verify the existence of a global attractor for that semigroup. For
the convenience of the reader, we start with reminding the
definition of the attractor, see
\cite{BV},\cite{Hale},\cite{LA3},\cite{Tem}  for more details.

\begin{definition}\label{Def2.attr} A set $\mathcal A\subset H$ is a global attractor of a semigroup $S(t): H\to H$
 if the following properties are satisfied:
 \par
 1) $\mathcal A$ is a compact subset of $H$;
 \par
 2) $\mathcal A$ is strictly invariant: $S(t)\mathcal A=\mathcal A$ for all $t\ge0$;
 \par
 3) It attracts the images of all bounded sets as time goes to infinity, i.e., for every bounded subset $B\subset H$ and every
 neighborhood $\mathcal O(\mathcal A)$ of $\mathcal A$, there exists $T=T(B,\mathcal O)$ such that
 $$
 S(t)B\subset \mathcal O(\mathcal A), \ \ \forall t\ge T.
$$
\end{definition}
The following theorem states the existence of the attractor for the
problem considered.
\begin{theorem}\label{Th2.attr} Let the assumptions of Theorem \ref{Th.reg} hold. Then the solution semigroup
\eqref{2.sem} associated with the Brinkman-Forchheimer equation \eqref{bf1} possesses a global attractor $\mathcal A$
 (in the sense of the above definition) which is bounded in $H^2$ and is generated by all complete bounded solutions of
  \eqref{bf1} defined for all $t\in\R$:
  \begin{equation}\label{2.atrstr}
  \mathcal A=\mathcal K\big|_{t=0},
  \end{equation}
  where $\mathcal K:=\{u\in C_b(\R,H^2), \ u $ solves \eqref{bf1}\}.
  \end{theorem}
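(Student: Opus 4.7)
The plan is to verify the three classical hypotheses which together imply existence of a global attractor for the continuous semigroup $S(t)$ on the phase space $H$: (a) continuity of each $S(t): H\to H$, (b) existence of a bounded absorbing set in $H$, and (c) asymptotic compactness, which here will be achieved by producing a compact (in $H$) absorbing set. Once these are in place, existence of $\mathcal A$, its compactness in $H$, the boundedness in $H^2$, and the representation formula \eqref{2.atrstr} will follow from the standard abstract theory of \cite{BV,Tem}.

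Continuity of $S(t)$ for each fixed $t\ge 0$ is immediate from Lemma \ref{Lem2.uni}, which in fact yields a Lipschitz estimate with exponential constant $e^{(K-\lambda_1)t}$. The existence of a bounded absorbing set in $H$ is a direct consequence of the energy estimate \eqref{1.en}: for any bounded $B\subset H$, the ball
\[
B_H:=\{u\in H\,:\,\|u\|_{L^2}^2\le 2C(1+\|g\|^2_{L^2})\}
\]
absorbs $B$ in finite time.

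The asymptotic compactness is where the smoothing property does the work. Applying the estimate \eqref{1.h2sm} at time $t=1$ to trajectories originating in $B_H$, we obtain a bounded set $\mathcal B\subset H^2$ of the form $\|u\|_{H^2}\le Q_1(\|g\|_{L^2})$ which absorbs every bounded subset of $H$ after a finite time depending only on $B$. Since $\Omega$ is a bounded domain with $C^2$ boundary, Rellich--Kondrachov gives that $H^2\hookrightarrow H$ is compact, so $\mathcal B$ is precompact in $H$. Thus $\mathcal B$ is a compact absorbing set for $S(t)$ in $H$, and the standard existence theorem yields a global attractor $\mathcal A\subset \mathcal B$, automatically bounded in $H^2$.

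The representation \eqref{2.atrstr} as the union of values at $t=0$ of complete bounded trajectories is the classical consequence of strict invariance $S(t)\mathcal A=\mathcal A$. The only point requiring care is that these complete trajectories lie in $C_b(\R,H^2)$ rather than merely in $C_b(\R,H)$: given any $u_0\in\mathcal A$ and any complete trajectory $u\in C_b(\R,H)$ through it, invariance gives $u(s)\in\mathcal A\subset H^2$ for every $s\in\R$, and applying the smoothing estimate \eqref{1.h2sm} on the interval $[s-1,s]$ with bounded data in $H$ (since $\mathcal A$ is bounded in $H$) yields a uniform $H^2$-bound for $u(s)$, independent of $s\in\R$. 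No genuine obstacle remains here: all the substantive analytic work has been carried out in Section \ref{s1} and in the maximal regularity Theorem \ref{mr} of the Appendix.
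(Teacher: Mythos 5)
Your proof is correct and takes essentially the same route as the paper: continuity of $S(t)$ via Lemma \ref{Lem2.uni}, a compact absorbing set that is bounded in $H^2$ via the smoothing estimate \eqref{1.h2sm}, and then the abstract attractor existence theorem of \cite{BV,Tem} for existence, $H^2$-boundedness and the representation \eqref{2.atrstr}. The extra details you supply (the intermediate $L^2$-absorbing ball from \eqref{1.en} and the explicit uniform $H^2$-bound on complete trajectories in $\mathcal K$) merely spell out what the paper delegates to the standard theory.
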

  Indeed, according to the abstract attractor existence theorem (see e.g., \cite{BV},\cite{Tem}), we only need to check that
    the considered semigroup  is continuous with respect to the initial data (for every fixed $t$) and
    it possesses a compact absorbing set in $H$. But the first assertion is an immediate corollary of Lemma \ref{Lem2.uni} and
    the second one follows from the estimate \eqref{1.h2sm}. Moreover, this estimate gives the absorbing
     set bounded in $H^2$. Since the attractor is always contained in an absorbing set, we have verified the
      existence of a global attractor $\mathcal A$ which is bounded in $H^2$. Finally, the representation \eqref{2.atrstr}
       of the attractor in terms of completer bounded trajectories is also a standard corollary of the attractor
        existence theorem mentioned above.

  \begin{remark}\label{Rem2.reg} Although, we have stated only the $H^2$-regularity of the attractor $\mathcal A$,
   it can be further improved (if $f$, $\Omega$ and $g$ are smooth enough) using the maximal regularity for the {\it linear}
    Stokes equation and bootstrapping. In particular, if $f$, $\Omega$ and $g$ are  $C^\infty$ smooth, the attractor
    will be also $C^\infty$-smooth.
    \par
    Another standard corollary of the general theory is the fact that the obtained attractor has
    a finite Hausdorff and fractal dimension in $H$. The proof of this fact is a straightforward implementation of the
     volume contraction technique to our equation (see e.g., \cite{BV,Tem}). Indeed, due to the embedding $H^2\subset C$,
     the nonlinearity $f$ is subordinated to the linear part of the equation (no matter how large
     is the growth exponent $r$) and one even is able to reduce formally the problem considered to the case of abstract semilinear
      parabolic equations.
  \end{remark}

  To conclude this section, we discuss the particular case of \eqref{bf1} where
  \begin{equation}\label{2.grad}
  f(u)=-\nabla_u F(u),\ \
  \end{equation}
  for some scalar function $F\in C^2(\R^3)$. Note that this condition is satisfied for the "most natural"
  nonlinearities
  $$
  f(u)=a u|u|^{r-1}-b u.
  $$
   In that case, multiplying the equation by $\Dt u$
  and integrating over $\Omega$, we get
  $$
  \frac d{dt}\mathcal L(u(t))=-\|\Dt u(t)\|^2_{L^2}\le0,
  $$
  where
  $$
  \mathcal L(u):=\frac12(\Nx u,\Nx u)+(F(u),1).
  $$
  Thus, the solution semigroup $S(t)$ possesses the global Lyapunov functional $\mathcal L(u)$ and applying the standard
   arguments (see \cite{Hale},\cite{LA3}) to our problem, we obtain the following result.

  \begin{corollary} Let the assumptions of Theorem \ref{Th2.attr} and the condition \eqref{2.grad} be satisfied. Then,
  every trajectory  $u(t)$ stabilizes as $t\to\infty$ to the set of equilibria
  \begin{equation}\label{2.eq}
  \mathcal R:=\{u_0\in H^2, Au_0-\Pi f(u_0)=\Pi g\}.
  \end{equation}
  Furthermore, if the set $\mathcal R$ is discrete, every trajectory $u(t)$ converges to a single
   equilibrium $u_0\in\mathcal R$ and the rate of convergence is exponential if that equilibrium is hyperbolic.
   \end{corollary}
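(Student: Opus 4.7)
The plan is to apply the standard theory of gradient semiflows with compact absorbing set, using the Lyapunov functional $\mathcal L$ (augmented by $-(g,u)$ so that the forcing is absorbed, giving the identity $\frac{d}{dt}\mathcal L(u(t))=-\|\Dt u(t)\|_{L^2}^2$). First I would check that $\mathcal L$ is bounded below on $H^1$: integrating the coercivity assumption \eqref{0.fcond}(1) twice in the radial variable yields $F(u)\ge c|u|^{r+1}-C$, so a Young-type estimate handles the $-(g,u)$ piece and the gradient term is manifestly non-negative. The Lyapunov identity itself is obtained by multiplying \eqref{bf1} by $\Dt u$ and using $\divv \Dt u=0$ to cancel the pressure together with $\partial_t F(u)=-f(u)\cdot\Dt u$; this testing is legitimate for $t>0$ because Theorem \ref{Th.reg} supplies $u(t)\in H^2$ and $\Dt u(t)\in L^2$.

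Next I would implement LaSalle's invariance principle. The smoothing estimate \eqref{1.h2sm} shows that $\{S(t)u_0\}_{t\ge 1}$ is bounded in $H^2$, hence precompact in $H$ by Rellich, so the $\omega$-limit set $\omega(u_0)\subset H^2$ is non-empty, compact, strictly invariant, and connected. Along any complete bounded trajectory $u^{\infty}$ lying in $\omega(u_0)$, the Lyapunov functional $\mathcal L(u^{\infty}(\cdot))$ is constant (it is non-increasing on every forward orbit and $\mathcal L$ attains its limiting value throughout $\omega(u_0)$), so $\Dt u^{\infty}\equiv 0$ and $u^{\infty}(0)\in\mathcal R$. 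This gives $\omega(u_0)\subset\mathcal R$, which is the first claim. If $\mathcal R$ is discrete, the connectedness of $\omega(u_0)$ immediately forces it to be a singleton, yielding convergence to a unique equilibrium.

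For the exponential rate near a hyperbolic equilibrium $u_*\in\mathcal R$, I would linearize: setting $w=u-u_*$ gives $\Dt w=Lw+N(w)$ with $L:=A-\Pi f'(u_*)$ and $\|N(w)\|_{L^2}=o(\|w\|_{H^2})$ as $w\to 0$ in $H^2$. Because $A$ is self-adjoint with compact resolvent and $\Pi f'(u_*)$ is a relatively compact perturbation (thanks to $H^2\subset C$ from the three-dimensional Sobolev embedding), $L$ is sectorial with compact resolvent, and hyperbolicity means $\sigma(L)\cap i\R=\emptyset$, so the spectrum splits into pieces contained in $\{\Re\la\le-\delta\}$ and $\{\Re\la\ge\delta\}$ for some $\delta>0$. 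Combining the already established convergence $u(t)\to u_*$ (which rules out any non-trivial unstable component) with a standard local stable-manifold argument or, equivalently, a local Lyapunov function built from the spectral projection onto the stable part of $L$, then produces $\|u(t)-u_*\|_{H^2}\le C e^{-\gamma t}$ for some $\gamma>0$. I expect this last step to be the main technical obstacle: verifying the sectoriality of $L$ in the non-self-adjoint setting and controlling the nonlinear remainder $N(w)$ in the $H^2$-norm, which requires invoking the maximal regularity of Theorem \ref{mr} once more to upgrade $L^2$-bounds on $\Dt w$ to $H^2$-bounds on $w$ exactly as in Section \ref{s1}.
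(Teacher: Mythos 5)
Your proposal is correct and follows essentially the same route as the paper: the paper obtains the Lyapunov identity by multiplying \eqref{bf1} by $\Dt u$ and then simply invokes the ``standard arguments'' of \cite{Hale}, \cite{LA3}, which are precisely the LaSalle invariance, connectedness-of-$\omega$-limit-set, and hyperbolic stable-manifold steps you spell out, with the precompactness supplied by \eqref{1.h2sm} exactly as you say. Your augmentation of $\mathcal L$ by $-(g,u)$ and the lower-bound check on $F$ correctly fill in details the paper glosses over; just make your two sign conventions consistent, since $F(u)\ge c|u|^{r+1}-C$ corresponds to $f=+\nabla_u F$ while $\partial_t F(u)=-f(u)\cdot\Dt u$ corresponds to $f=-\nabla_u F$ (an inconsistency the paper itself also contains).
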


  \begin{remark}\label{Rem2.gen} Note that, for generic $g\in L^2$, the set $\mathcal R$ will
   contain only {\it hyperbolic} equilibria (see \cite{BV}). In that case, as it is not difficult to prove (again verifying
    the conditions of the abstract theorem on regular attractors stated in \cite{BV}), the attractor $\mathcal A$ can be presented as a finite union
    of finite-dimensional submanifolds of $H$ (the unstable manifolds of all equilibria) and that the rate of attraction of
     any bounded subset $B$ to the global attractor $\mathcal A$ is {\it exponential}.
  \end{remark}

\section{The convective Brinkman-Forchheimer equations}\label{s3}
In this section, we extend the results of the previous section to the case of the following Brinkman-Forchheimer
 equation with the Navier-Stokes type inertial term:
 \begin{equation}\label{3.ns}
 \Dt u+(u,\Nx)u+\Nx p=\Dx u-f(u)+g,\ \ \divv u=0.
 \end{equation}
Note that the case $f=0$ corresponds to the classical Navier-Stokes problem and the general case $f\ne0$ can
 be also considered as the so-called tamed Navier-Stokes equation, see \cite{RZh}.
\par
As before, the nonlinearity $f$ is assumed to satisfy conditions \eqref{0.fcond} but with the additional {\it lower}
bound $r>3$ which is necessary for the uniqueness. Note that no upper bounds for the growth exponent is posed.
\par
As before, we define a weak solution $u$ as a function of the class \eqref{2.regweak} satisfying \eqref{3.ns}
in the sense of distributions, see Definition \ref{Def2.weak}. In addition the assumption $r\ge3$ guarantees that
\begin{equation}\label{3.ine}
(u,\Nx)u\in L^{4/3}\subset L^q,\ \ q:=(r+1)^*\le 4/3
\end{equation}
and, therefore, in contrast to the case of the classical Navier-Stokes equations, the multiplication of \eqref{3.ns} by $u$
with integration over $\Omega$ is justified for {\it any} weak energy solution of that equation. Thus, we have
verified that any weak energy solution of \eqref{3.ns} satisfies the energy estimate \eqref{1.en}. The existence
of an energy solution can be then obtained in a standard way via the Galerkin approximation method.
\par
The next Lemma gives the uniqueness of the energy solution for the case $r>3$.

\begin{lemma}\label{Lemma3.uni} Let the nonlinearity $f$ satisfy \eqref{0.fcond} with $r>3$ and $g\in L^2$. Then,
for every $u_0\in H$, the problem \eqref{3.ns} possesses a unique
weak solution $u$ and this solution satisfies
 the energy estimate \eqref{1.en}.
\end{lemma}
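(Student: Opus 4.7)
The plan is to treat existence and the energy estimate briefly, and to focus the work on uniqueness, where the hypothesis $r>3$ is essential.

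For existence, I would follow the Galerkin scheme exactly as for \eqref{bf1} in Section \ref{s2}: build finite-dimensional approximations $u_N$ on the span of the eigenfunctions of $A$, derive the same energy bound \eqref{1.en} at the Galerkin level (the convective term $((u_N,\Nx)u_N,u_N)$ vanishes because $\divv u_N=0$ and $u_N|_{\partial\Omega}=0$), and pass to the limit using the Aubin--Lions compactness lemma. The only nontrivial point is to check that the convective term $(u,\Nx)u$ makes sense in the distributional formulation for a weak solution in the class \eqref{2.regweak}; but exactly as observed in the text right before the lemma, the energy integrability $u\in L^{r+1}_{loc}(L^{r+1})\cap L^2_{loc}(H^1)\subset L^\infty_{loc}(L^2)$ together with $r\ge 3$ gives $(u,\Nx)u\in L^{4/3}$ in space-time locally, which is enough. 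Since a weak solution belongs to $L^2_{loc}(H^1)\cap L^{r+1}_{loc}(L^{r+1})$ and $\pt u$ belongs to the dual of the intersection space, the chain rule for $\|u\|_{L^2}^2$ is valid and the energy estimate \eqref{1.en} follows as in Lemma \ref{Lem1.en}.

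The main task is uniqueness. Let $u_1,u_2$ be two weak solutions with possibly different initial data and set $w:=u_1-u_2$. Then $w$ satisfies
\begin{equation*}
\pt w-\Dx w+(w,\Nx)u_1+(u_2,\Nx)w+\Pi(f(u_1)-f(u_2))=0,\qquad \divv w=0.
\end{equation*}
Testing with $w$, the term with $(u_2,\Nx)w$ vanishes (since $\divv u_2=0$ and $w|_{\partial\Omega}=0$), and using the strong monotonicity deduced from \eqref{0.fcond}(1),
\begin{equation*}
(f(u_1)-f(u_2))\cdot(u_1-u_2)\ge -K|w|^2+\kappa\bigl(|u_1|^{r-1}+|u_2|^{r-1}\bigr)|w|^2,
\end{equation*}
we arrive at
\begin{equation*}
\tfrac12\tfrac{d}{dt}\|w\|_{L^2}^2+\|\Nx w\|_{L^2}^2+\kappa\!\int(|u_1|^{r-1}+|u_2|^{r-1})|w|^2\,dx\le K\|w\|_{L^2}^2+|((w,\Nx)u_1,w)|.
\end{equation*}

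The heart of the argument is the estimate of the inertial remainder. Integrating by parts (using $\divv w=0$ and $w|_{\partial\Omega}=0$),
\begin{equation*}
|((w,\Nx)u_1,w)|=|((w,\Nx)w,u_1)|\le \int |u_1||w||\Nx w|\,dx\le \tfrac12\|\Nx w\|_{L^2}^2+C\!\int|u_1|^2|w|^2\,dx.
\end{equation*}
Here the condition $r>3$ is used decisively: since $\frac{2}{r-1}<1$, Young's inequality yields
\begin{equation*}
|u_1|^2|w|^2\le \varepsilon|u_1|^{r-1}|w|^2+C_\varepsilon|w|^2,
\end{equation*}
and choosing $\varepsilon$ small enough we can absorb the $\int |u_1|^{r-1}|w|^2$ into the monotonicity term coming from $f$. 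This leaves the pure Gronwall inequality $\frac{d}{dt}\|w\|_{L^2}^2\le C\|w\|_{L^2}^2$, proving uniqueness and continuous dependence on the initial data in $H$. The main obstacle (and the reason for $r>3$) is precisely this absorption step: at the borderline $r=3$ the Young inequality degenerates and one only obtains $\int|u_1|^2|w|^2$ bounded by $\int|u_1|^{r-1}|w|^2$ without any free parameter, so additional small-time or smallness arguments would be required.
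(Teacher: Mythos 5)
Your proposal is correct and follows essentially the same route as the paper: the same monotonicity inequality $(f(u_1)-f(u_2))\cdot(u_1-u_2)\ge -C|w|^2+\alpha(|u_1|^{r-1}+|u_2|^{r-1})|w|^2$, the same integration by parts turning $((w,\Nx)u_1,w)$ into a term controlled by $\|\Nx w\|_{L^2}^2+C\int|u_1|^2|w|^2$, and the same pointwise Young absorption of $|u_1|^2$ into $|u_1|^{r-1}$ using $r-1>2$, followed by Gronwall. Even your closing observation about the borderline case $r=3$ matches the paper's Remark \ref{Rem3.3}, which notes that uniqueness there requires $\kappa$ large.
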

\begin{proof}
Indeed, let $u_1$ and $u_2$ be two solutions and let $v=u_1-u_2$. Then, this function solves
\begin{equation}\label{3.dif}
\Dt v+(v,\Nx)u_1+(u_2,\Nx)v+\Nx q=\Dx v-[f(u_1)-f(u_2)],\ \ \divv v=0.
\end{equation}
Multiplying this equation by $v$, integrating by parts and using that $f$ satisfies \eqref{0.fcond}, we will have
$$
\frac d{dt}\|v\|^2_{L^2}+2\|\Nx v\|^2_{L^2}+\alpha(|u_1|^{r-1}+|u_2|^{r-1},|v|^2)\le C\|v\|^2_{L^2}+2|((v,\Nx) u_1,v)|
$$
for some positive $\alpha$ depending on $\kappa$ from
\eqref{0.fcond}. Here we have implicitly used  that the first
condition of \eqref{0.fcond} implies that
 $$
 (f(u_1)-f(u_2),u_1-u_2)\ge -C\|u_1-u_2\|^2_{L^2}+\alpha(|u_1|^{r-1}+|u_2|^{r-1},|u_1-u_2|^2),
 $$
 see \cite{Li} and \cite{GGZ} for the details.
 \par
The last term in the above differential inequality can be estimated integrating by parts once more and using that $r-1>2$:
\begin{multline}
2|((v,\Nx) u_1,v)|\le2(|u_1|\cdot|v|,|\Nx v|)\le \|\Nx v\|^2_{L^2}+C(|u_1|^2,|v|^2)\le\\\le\|\Nx v\|^2_{L^2}+
\alpha(|u_1|^{r-1}+|u_2|^{r-1},|v|^2)+C\|v\|^2_{L^2}.
\end{multline}
Thus, we have
\begin{equation}\label{3.uni}
\frac d{dt}\|v\|^2_{L^2}+\|\Nx v\|^2_{L^2}\le C\|v\|^2_{L^2}
\end{equation}
and the uniqueness is proved.
\end{proof}
\begin{remark}\label{Rem3.3} As we see from the proof, the uniqueness holds for the case $r=3$ if the
 coefficient $\kappa$ in \eqref{0.fcond} is {\it large} enough. However, we do not know whether or not
 the uniqueness holds for any cubic nonlinearity (without this assumption).
 \end{remark}
The next theorem is analogous to Theorem \ref{Th.reg} and gives the regularity of  solutions for problem \eqref{3.ns}.

\begin{theorem}\label{Th3.reg} Let the function $f$ satisfy \eqref{0.fcond} with $r>3$ and let $g\in L^2$. Then, for any $u_0\in H$,
the associated solution $u(t)$ of \eqref{3.ns} is more regular for $t>0$ ($u(t)\in H^2$) and estimate \eqref{1.h2sm} holds.
 In addition, if $u_0\in H^2$ then estimate \eqref{1.h2en} also holds.
\end{theorem}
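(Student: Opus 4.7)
The plan is to mirror the strategy of Theorem \ref{Th.reg}, reproducing at the level of Galerkin approximations the three a priori estimates of Section \ref{s1} and then passing to the limit (existence and uniqueness of energy solutions are already given by Lemma \ref{Lemma3.uni} and the remarks preceding it). Namely, I would re-derive: (i) the energy estimate \eqref{1.en} for $u$; (ii) the dissipative/smoothing estimates \eqref{1.dten} and \eqref{1.dtsm} for $v:=\Dt u$; and (iii) the conversion of these into the $H^2$-bounds \eqref{1.h2sm}, \eqref{1.h2en} via the maximal regularity result for the convective stationary problem \eqref{0.ns} proved in the Appendix.

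The first two ingredients require only mild adjustments. The energy estimate \eqref{1.en} is preserved verbatim, since the inertial term vanishes under the pairing with $u$ thanks to $\divv u=0$ and $u|_{\partial\Omega}=0$ (and the remaining nonlinear pairing is justified by \eqref{3.ine}). For the $L^1_tH^{-2}$ control of $\Dt u$ (the analogue of Corollary \ref{Cor1.dt}), the only new term to absorb is $(u,\Nx)u=\divv(u\otimes u)$: using $r\ge3$ the symmetric tensor $u\otimes u$ lies in $L^{(r+1)/2}$ uniformly in time, hence $(u,\Nx)u\in W^{-1,(r+1)/2}\subset H^{-2}$ (since $n=3$), and the argument of Corollary \ref{Cor1.dt} goes through with only notational changes.

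The crucial step, and the main obstacle, is the analogue of Lemma \ref{Lem1.dten}. Differentiating \eqref{3.ns} in time yields
\begin{equation*}
\Dt v+(v,\Nx)u+(u,\Nx)v-\Dx v+f'(u)v+\Nx q=0,\qquad \divv v=0,
\end{equation*}
and testing with $v$ rests on three facts: $((u,\Nx)v,v)=0$ by $\divv u=0$; $(f'(u)v,v)\ge -K\|v\|^2_{L^2}+\kappa(|u|^{r-1},|v|^2)$ from \eqref{0.fcond}; and finally the dangerous cross-term $((v,\Nx)u,v)$ is controlled exactly as in the uniqueness proof of Lemma \ref{Lemma3.uni}: one integration by parts and Young's inequality give
\begin{equation*}
2|((v,\Nx)u,v)|\le \|\Nx v\|^2_{L^2}+C(|u|^2,|v|^2),
\end{equation*}
after which the quartic term $C(|u|^2,|v|^2)$ is absorbed into the dissipative term $\alpha(|u|^{r-1},|v|^2)$ via one further Young inequality — here the assumption $r>3$, equivalently $r-1>2$, is decisive. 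The outcome is precisely the differential inequality \eqref{1.dtgr}, so Gronwall delivers \eqref{1.dten}.

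With \eqref{1.dten} and the $L^1_tH^{-2}$ control of $v$ in hand, the $t^N$/interpolation trick of Lemma \ref{Lem1.dtsm} can be repeated unchanged (with $N=6$) to obtain the smoothing estimate \eqref{1.dtsm}. Finally, rewriting \eqref{3.ns} as the stationary convective problem
\begin{equation*}
-\Dx u+(u,\Nx)u+f(u)+\Nx p=g-\Dt u
\end{equation*}
and applying to it the maximal regularity theorem for \eqref{0.ns} proved in the Appendix yields \eqref{1.h2sm}, and combining it with the $t\le 1$ portion of \eqref{1.dten} (as in the proof of Theorem \ref{Th1.h2dis}) gives \eqref{1.h2en}. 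The whole argument is therefore a careful re-run of Section \ref{s1}, the only genuine difficulty being the handling of $((v,\Nx)u,v)$ in the energy identity for $v$ — turned into a harmless lower-order perturbation precisely by the quasi-monotonicity of $f$ together with $r>3$.
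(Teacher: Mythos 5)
Your proposal is correct and follows essentially the same route as the paper: differentiate \eqref{3.ns} in time, control the cross term $((v,\Nx)u,v)$ exactly as in the uniqueness proof of Lemma \ref{Lemma3.uni} (which is where $r>3$ enters) to recover the differential inequality \eqref{3.uni} for $v=\Dt u$, combine with the $L^1_tH^{-2}$ control of $\Dt u$ and the $t^N$-weighted smoothing trick to get \eqref{1.dten} and \eqref{1.dtsm}, and finally apply the maximal regularity of Corollary \ref{CorA.reg} to the stationary convective problem \eqref{nst1} with right-hand side $g-\Dt u$. The paper's own proof is just a terser version of this same argument, and your filled-in details (in particular the absorption of $C(|u|^2,|v|^2)$ into $\kappa(|u|^{r-1},|v|^2)$ by Young's inequality) are accurate.
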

\begin{proof} The proof of this theorem is also analogous to the proof of Theorem \ref{Th.reg}. Indeed, differentiating
equation \eqref{3.ns} with respect to $t$ and arguing as in the
proof of the previous lemma, we conclude that the function $v=\Dt u$
satisfies the differential inequality \eqref{3.uni}. On the other
hand, using \eqref{3.ine} for the control
 of the inertial term and arguing as in Corollary \ref{Cor1.dt}, we derive estimate \eqref{1.dt} and based on that
  estimate and inequality \eqref{3.uni} for $v=\Dt u$, one derives the controls \eqref{1.dtsm} and \eqref{1.dten} for
   the time derivative $v=\Dt u$ (all these estimates can be justified via the Galerkin approximations).
   \par
   Finally, having the control of the $L^2$-norm of $\Dt u$, one can treat problem \eqref{3.ns} as an elliptic
    boundary value problem of the form \eqref{nst1} and apply Corollary \ref{CorA.reg} which gives the desired
     estimate for the $H^2$-norm and finishes the proof of the theorem.
     \end{proof}
\begin{remark}\label{Rem3.rsmall} Note that the {\it nonlinear} localization technique used
 in the proof of Corollary \ref{CorA.reg} is not necessary if $r\le5$ where we may use the standard maximal regularity
  for the linear Stokes equation or in the case of periodic boundary condition. However, we do not know how to avoid
   these technicalities in a general case.
\end{remark}
Finally, let us note that the analogue of Theorem \ref{Th2.attr}
holds for the Navier-Stokes case as well.

\begin{theorem}\label{Th3.attr} Let the assumptions of Theorem \ref{Th3.reg} hold. Then the solution
 semigroup $S(t): H\to H$ possesses a global attractor $\mathcal A$ which is a bounded subset of $H^2$ and possesses the
 standard description \eqref{2.atrstr}.
\end{theorem}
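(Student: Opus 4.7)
The plan is to follow exactly the strategy used for Theorem \ref{Th2.attr}: apply the abstract global attractor existence theorem (see e.g. \cite{BV,Tem}), which requires (i) continuity of the semigroup $S(t)\colon H\to H$ with respect to the initial data for each fixed $t\ge 0$, and (ii) the existence of a compact absorbing set in $H$. The characterization \eqref{2.atrstr} of $\mathcal A$ in terms of complete bounded trajectories will then follow as a standard corollary of the same abstract theorem.

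For (i), I would use the differential inequality \eqref{3.uni} derived in the proof of Lemma \ref{Lemma3.uni} for the difference of two energy solutions. Applying Gronwall directly to \eqref{3.uni} gives
\begin{equation}
\|S(t)u_1(0)-S(t)u_2(0)\|_{L^2}\le e^{Ct/2}\|u_1(0)-u_2(0)\|_{L^2},
\end{equation}
with $C$ depending only on $r,\kappa,K$, so $S(t)$ is Lipschitz on $H$ for every fixed $t$. In particular, $S(t)\colon H\to H$ is continuous.

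For (ii), I would combine the smoothing estimate \eqref{1.h2sm} and the dissipative estimate \eqref{1.h2en} provided by Theorem \ref{Th3.reg}. The smoothing bound \eqref{1.h2sm} shows that for any bounded $B\subset H$, the image $S(1)B$ is bounded in $H^2$; the dissipative bound \eqref{1.h2en} then implies that the $H^2$-norm of $S(t)B'$ decays exponentially from any $H^2$-bounded $B'$ to a ball whose radius depends only on $\|g\|_{L^2}$. Chaining these two, I would fix $R=2Q(\|g\|_{L^2})$ and check that the $H^2$-ball $\mathcal B:=\{v\in H^2\st \|v\|_{H^2}\le R\}$ absorbs every bounded subset of $H$ in finite time. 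Since $\Omega$ is bounded and smooth, the embedding $H^2\subset H$ is compact by Rellich-Kondrachov, hence $\mathcal B$ is precompact in $H$ and therefore a compact absorbing set there.

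With (i) and (ii) verified, the abstract theorem produces the global attractor $\mathcal A\subset H$, and since $\mathcal A$ is contained in any absorbing set we immediately obtain that $\mathcal A$ is bounded in $H^2$. The representation \eqref{2.atrstr} as the section at $t=0$ of the set $\K$ of all complete bounded trajectories is a standard consequence of the abstract construction. The only place where the Navier–Stokes convective term could have created a genuine obstacle is in establishing the Lipschitz estimate and the $H^2$-smoothing uniform in $t$; but both of these have already been handled in Lemma \ref{Lemma3.uni} and Theorem \ref{Th3.reg} by crucially exploiting the assumption $r>3$, which lets the coercive term $\alpha(|u_1|^{r-1}+|u_2|^{r-1},|v|^2)$ absorb the inertial contribution $|((v,\Nx)u_1,v)|$. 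Given these a priori estimates, the remaining reduction to the abstract attractor theorem is essentially identical to the one carried out in Section \ref{s2}.
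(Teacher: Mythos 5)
Your proposal is correct and follows essentially the same route as the paper: the paper's proof of Theorem \ref{Th3.attr} simply states that it repeats word by word the proof of Theorem \ref{Th2.attr}, i.e.\ one verifies continuity of $S(t)$ on $H$ (here via the Lipschitz estimate coming from \eqref{3.uni}) and the existence of a compact absorbing set bounded in $H^2$ (from the smoothing/dissipative estimates supplied by Theorem \ref{Th3.reg}), and then invokes the abstract attractor existence theorem together with its standard corollary giving \eqref{2.atrstr}. Your chaining of \eqref{1.h2sm} with \eqref{1.h2en} is a harmless slight elaboration, since \eqref{1.h2sm} alone already contains the exponential decay needed for absorption.
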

The proof of this theorem repeats word by word the proof of Theorem \ref{Th2.attr} and so is omitted.
\begin{remark} \label{Rem.last} To conclude, we note that all assertions formulated in Remark \ref{Rem2.reg}
 remain true for the convective case as well.
\end{remark}

\section{Appendix: Maximal regularity for semi-linear Stokes problem}\label{s4}
The appendix is devoted to  the stationary problem associated with the problem
\eqref{bf1}, that is the following semi-linear Stokes problem:

\begin{equation}\label{st1}
\begin{cases}
-\Dx w+f(w)+\Nx p=g, \  \ \divv w=0, \  x \in \Omega, \\
w=0 , \ \ x \in \partial\Omega,\ \ \int_\Omega p(x)\,dx=0.
\end{cases}\end{equation}
Here $w=(w_1,w_2,w_3)$, $g\in L^2(\Omega)$ is a given function and
the nonlinearity $f$ satisfies assumptions \eqref{0.fcond} with
arbitrary $r>1$ and with $K=0$. Thus, we have assumed that the
nonlinearity $f$ is monotone $f'(u)\ge0$ and, therefore, the energy
solution of \eqref{st1} is unique.
\par
Our aim here is to prove the  $L^2$-maximal regularity
estimate for  problem \eqref{st1} (which is the non-linear version of the classical $L^2$-regularity theorem
for the Stokes operator). Before stating the main result, we first remind the straightforward $L^q$-regularity result where $q=(r+1)^*=1+\frac1r$.
\begin{lemma}\label{Lem-simple} Let the above assumptions on $f$ hold and let $g\in L^q(\Omega)$.
 Then, problem \eqref{st1} has a unique solution $(w,p)\in\mathcal F_q$ where
$$
\mathcal F_q:=\{(w,p),\, w\in W^{2,q}(\Omega)\cap L^{r+1}(\Omega),\ \ p\in W^{1,q}(\Omega)\}
$$
and the following estimate holds:
\begin{equation}\label{q-reg}
\|w\|_{W^{2,q}}+\|w\|_{L^{r+1}}^r+\|p\|_{L^{3/2+\eb}}+\|w\|_{H^1}^{2r/(r+1)}\le C(1+\|g\|_{L^q}),
\end{equation}
for some positive $C$ independent of $g$ and sufficiently small $\eb=\eb(q)>0$.
\end{lemma}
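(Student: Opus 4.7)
My plan is to combine a direct energy estimate with the classical $L^q$-regularity theory for the linear Stokes operator, using the monotonicity of $f$ (guaranteed by $K=0$) for both uniqueness and the passage to the limit in an approximation scheme.

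First, I would establish the a priori estimates. Testing the equation with $w$, the pressure term drops because $\divv w=0$ and $w|_{\partial\Omega}=0$. Integrating the first condition in \eqref{0.fcond} along the segment from $0$ to $u$ (with $K=0$) gives the coercivity
\[
f(u)\cdot u \ge \kappa' |u|^{r+1}-C,
\]
so that
\[
\|\Nx w\|_{L^2}^2+\kappa'\|w\|_{L^{r+1}}^{r+1}\le C+(g,w).
\]
Since $q^*=r+1$, Hölder and Young's inequalities bound $(g,w)$ by $\tfrac{\kappa'}{2}\|w\|_{L^{r+1}}^{r+1}+C\|g\|_{L^q}^{(r+1)/r}$, yielding the $H^1$ and $L^{r+1}$ parts of \eqref{q-reg} (after taking appropriate roots, using $(r+1)/r=q$).

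Second, I would use these two estimates to bound the nonlinearity. By assumption $|f(w)|\le C(1+|w|^r)$, and since $rq=r+1$,
\[
\|f(w)\|_{L^q}^q \le C\(1+\|w\|_{L^{r+1}}^{r+1}\)\le C(1+\|g\|_{L^q}^{(r+1)/r}),
\]
so that $\|f(w)\|_{L^q}\le C(1+\|g\|_{L^q})$. I then view \eqref{st1} as a linear Stokes system $-\Dx w+\Nx p=g-f(w)$, $\divv w=0$, $w|_{\partial\Omega}=0$, with right-hand side in $L^q(\Om)$ and $q>1$. The classical $L^q$-maximal regularity of the Stokes operator on a $C^2$ domain (Cattabriga / Galdi) then gives
\[
\|w\|_{W^{2,q}}+\|p\|_{W^{1,q}}\le C\|g-f(w)\|_{L^q}\le C(1+\|g\|_{L^q}),
\]
where we use the normalization $\int_\Om p=0$ to eliminate the constant ambiguity via Poincaré. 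Finally, since $q=(r+1)/r>1$, the Sobolev embedding $W^{1,q}(\Om)\hookrightarrow L^{q^*}(\Om)$ with $q^*=\tfrac{3q}{3-q}>\tfrac{3}{2}$ gives $\|p\|_{L^{3/2+\eb}}\le C\|p\|_{W^{1,q}}$ for $\eb=\eb(q)>0$, completing the estimate \eqref{q-reg}.

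For existence I would set up a Galerkin scheme in $V\cap L^{r+1}$ (or equivalently regularize $f$ by a bounded truncation and solve the resulting coercive monotone problem via, e.g., the Browder–Minty theorem). The a priori bounds above are uniform on the approximation level, so one extracts a weak limit in $H^1\cap L^{r+1}$ and strong $L^2$ limits by compactness; the monotonicity of $f$ then allows one to identify the limit of the nonlinear term by the standard Minty argument, and the pressure is recovered from the divergence-free constraint via de Rham's lemma. Uniqueness follows at once: subtracting two solutions, testing with $w_1-w_2$, and using that $(f(w_1)-f(w_2))\cdot(w_1-w_2)\ge 0$ by monotonicity forces $\Nx(w_1-w_2)\equiv 0$, hence $w_1=w_2$, and then $\Nx(p_1-p_2)=0$ together with $\int(p_1-p_2)=0$ gives $p_1=p_2$.

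The only mildly delicate point is the Minty passage to the limit for the nonlinearity when $r$ is large (so that $q$ is close to $1$ and the dual space geometry degenerates), but since the approximations satisfy uniform bounds in $L^{r+1}$ and $f$ is monotone with the quantified growth in \eqref{0.fcond}, the standard argument applies. All other pieces — the linear Stokes regularity, the Sobolev embedding for $p$, and the monotonicity identity — are applied off the shelf.
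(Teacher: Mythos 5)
Your proposal follows essentially the same route as the paper's own proof: the energy estimate from testing with $w$ (using the coercivity of $f$), the resulting $L^q$ bound $\|f(w)\|_{L^q}\le C(1+\|g\|_{L^q})$ via $rq=r+1$, the reduction to the linear Stokes system $-\Dx w+\Nx p=g-f(w)$ with Cattabriga-type $L^q$ maximal regularity, and the Sobolev embedding $W^{1,q}\hookrightarrow L^{3q/(3-q)}$ with $3q/(3-q)>3/2$ for the pressure. Your additional detail on existence and uniqueness (Galerkin/truncation with the Minty monotonicity argument and de Rham's lemma) correctly fills in what the paper dismisses as standard, so the proposal is correct and matches the paper's argument.
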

\begin{proof} We give below only the derivation of the estimate in the space $\mathcal F_q$
(the existence and uniqueness of the solution can be obtained in a standard way, e.g., using the
Galerkin approximation method). Indeed, multiplying equation \eqref{st1} by $w$, integrating by parts and using \eqref{0.fcond}, we arrive at
\begin{equation}\label{st3}
\|w\|_{H^1}^2+\|w\|_{L^{r+1}}^{r+1}\le C(1+\|g\|^q_{L^q}), \ q:=(r+1)^*=1+\frac1r,
\end{equation}
where $C$ is independent of $g$ and $w$. Together with conditions on $f$, this gives, in particular, that
\begin{equation}\label{st4}
\|f(w)\|_{L^q}\le C(1+\|g\|_{L^q}),
\end{equation}
for some (new) constant $C$.
\par
Rewriting now the problem \eqref{st1} as a linear Stokes problem
\begin{equation}\label{a.1}
-\Dx w+\Nx p=h_w:=g-f(w)
\end{equation}
and applying  the maximal $L^q$-regularity estimate for this linear
Stokes problem, we have
\begin{equation}\label{st5}
\|w\|_{W^{2,q}}+\|p\|_{W^{1,q}}\le C(1+\|g\|_{L^q}).
\end{equation}
In particular, due to Sobolev embedding theorem $W^{1,q}(\Omega)\subset L^s(\Omega)$
with $s:=\frac{3q}{3-q}>3/2$ (since $r>1$ and $q<1$)
\begin{equation}\label{st6}
\|p\|_{L^{3/2+\eb}}\le C(1+\|g\|_{L^q}),
\end{equation}
where $\eb=\eb(r)>0$ depends only on the exponent $r$.
\end{proof}
We are now ready to state the main result of this section.
\begin{theorem}\label{mr} Let $w$ be an energy solution of problem \eqref{st1}, $g\in L^2(\Omega)$ and the
 assumptions \eqref{0.fcond} on $f$  hold. Then, $w\in H^2(\Omega)$ and
the following estimate is valid:
\begin{equation}\label{st34}
\|w\|_{H^2(\Omega)}+\|p\|_{H^1(\Omega)}\le Q(\|(w,p)\|_{\mathcal F_q})(1+\|g\|_{L^2}^{2-\kappa})
\end{equation}
for some monotone function $Q$ and positive $\kappa=\kappa(r)$.
\end{theorem}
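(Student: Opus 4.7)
The starting point is Lemma~\ref{Lem-simple}, which supplies the baseline regularity $(w,p)\in\mathcal F_q$ with norm controlled by the data. Rewriting \eqref{st1} as the linear Stokes system
\begin{equation*}
-\Delta w+\nabla p=g-f(w),\qquad \divv w=0,\qquad w\big|_{\partial\Omega}=0,
\end{equation*}
the standard $L^2$-maximal regularity for the Stokes operator on $\Omega$ delivers $w\in H^2$ and $\nabla p\in L^2$ as soon as one establishes $f(w)\in L^2$, which via the growth bound in \eqref{0.fcond} reduces to $w\in L^{2r}$. My plan is to produce this integrability by \emph{formally} testing \eqref{st1} against $-\Delta w$: modulo boundary terms one obtains
\begin{equation*}
\|\Delta w\|_{L^2}^2+\int_\Omega f'(w)\nabla w:\nabla w\,dx \le \int_\Omega g\cdot(-\Delta w)\,dx,
\end{equation*}
and the monotonicity in \eqref{0.fcond} bounds the second integrand from below by $\kappa|w|^{r-1}|\nabla w|^2$, so that the Sobolev embedding applied to $\bigl\||w|^{(r+1)/2}\bigr\|_{H^1}$ gives $w\in L^{3(r+1)}$, comfortably more than enough for $f(w)\in L^2$.

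The difficulty, emphasized already in Remark~\ref{Rem1.bad}, is that in the Dirichlet setting integrating $\int \nabla p\cdot\Delta w$ by parts yields uncontrolled surface terms, and replacing $\Delta w$ by the Leray-projected multiplier $\Pi\Delta w$ does not help because the commutator $[\Pi,\Delta]$ is not innocuous on a bounded domain. To circumvent this I would split interior and boundary. Fix a smooth cutoff $\eta\ge 0$ equal to $1$ on a fixed interior subdomain and supported away from $\partial\Omega$, and test \eqref{st1} against $\eta\Delta w$. Using $\divv w=0$, the pressure contribution collapses to the harmless lower-order remainder $-\int p\,(\nabla\eta\cdot\Delta w)$, while the nonlinear term produces $\int\eta f'(w)\nabla w:\nabla w$ together with a commutator against $\nabla\eta$. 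All remainders are supported in $\mathrm{supp}\,\nabla\eta$, where $p\in L^{3/2+\eb}$, $w\in W^{2,q}$ and $w\in L^{r+1}$ are already controlled by Lemma~\ref{Lem-simple}; H\"older and interpolation, followed by Young on the $g\cdot\Delta w$ term, absorb these into the principal ones. This delivers the $L^2$-bound on $\Delta w$ and the $L^{3(r+1)}$-bound on $w$ in the interior.

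The boundary strip is handled by the nonlinear localization of \cite{KZ}. A partition of unity subordinate to boundary charts and a $C^2$-flattening reduce matters to the half-space $\R^3_+$ with Dirichlet condition on $\{x_3=0\}$. There, tangential differentiation is safe: for every direction $\tau$ tangent to $\{x_3=0\}$, the pair $(\partial_\tau w,\partial_\tau p)$ solves a linearized Stokes problem with the monotone coefficient $f'(w)\ge 0$ and the same homogeneous boundary condition, so testing against $\partial_\tau w$ controls $\|\partial_\tau w\|_{H^1}$ together with $\int f'(w)|\partial_\tau w|^2$. Repeating the $|w|^{(r+1)/2}$-computation in the tangential directions upgrades this to $L^{3(r+1)}$-integrability of $w$ in the boundary strip. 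The missing normal derivative $\partial_3^2 w_3$ is then read off from $\divv w=0$, and $\partial_3^2 w_j$ for $j=1,2$ is recovered algebraically from the flattened equation using the already-controlled tangential derivatives, $f(w)$, $g$ and $\partial_\tau p$. Gluing the interior and boundary contributions and feeding the resulting $L^2$-bound on $\Delta w$ back into linear Stokes maximal regularity yields \eqref{st34}; the exponent $\kappa$ in the statement emerges from the interpolation between the baseline $\mathcal F_q$-bound and the freshly-acquired $L^2$-bound used to absorb the cross terms.

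The main obstacle, to my mind, is the boundary step: the tangential/normal decomposition breaks the divergence-free constraint locally, forcing one to track how the pressure couples the velocity components when bootstrapping from tangential to normal regularity. Ensuring that the monotonicity of $f$ survives this coupling, so that the gain of integrability persists up to $\partial\Omega$ without leaving uncontrolled boundary remainders, is exactly the role of the nonlinear localization technique.
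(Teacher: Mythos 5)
Your skeleton is the same as the paper's: baseline $\mathcal F_q$ control from Lemma \ref{Lem-simple}, an interior estimate with a cutoff, tangential differentiation near the boundary, recovery of the normal derivatives from $\divv w=0$ and the equation, and finally linear Stokes maximal regularity. But at both places where the proof is genuinely hard your argument does not close, and at the first of them your claim is false as stated. In the interior step you call $-\int_\Omega p\,(\Nx\eta\cdot\Dx w)\,dx$ a ``harmless lower-order remainder'' controlled by the Lemma \ref{Lem-simple} bounds. It is not: to absorb it into $\|\eta^{1/2}\Dx w\|_{L^2}^2$ one needs (choosing the cutoff with $|\Nx\eta|\le C\eta^{1-\delta}$, as in \eqref{st8}) a bound on $\|\eta^{1/2-\delta}p\|_{L^2}$, whereas the baseline estimate gives only $p\in W^{1,q}\subset L^{3/2+\eb}$ with $q=1+1/r$ close to $1$ for large $r$ --- well short of local $L^2$ integrability of the pressure. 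The paper closes precisely this gap by extracting extra interior integrability of $p$ from the pressure equation $\Dx(\phi p)=h$ (see \eqref{st12}--\eqref{st13}); the catch is that the source $h$ contains $\phi\,\divv f(w)=\phi f'(w)\Nx w$, so the resulting bound \eqref{st16} on $\|\phi p\|_{L^{3+\eb}}$ is itself proportional to the coercive quantity $(\phi f'(w)\Nx w,\Nx w)$ being estimated, and the loop is closed only through absorption with the arbitrarily small constant $\beta$ in \eqref{st15}--\eqref{st17}. This self-consistent absorption is the heart of the interior nonlinear localization, and it is absent from your plan.

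The boundary step has the same defect: you correctly name the pressure coupling as ``the main obstacle,'' but naming it is not resolving it. Since $\divv\partial_\tau w=C(x)\Nx w\neq0$, testing the differentiated system against $z=\partial_\tau w$ produces the terms $\|\Nx w\|_{L^3}\|\partial_\tau p\|_{L^{3/2}}$ and $\|p\|_{L^2}^2$ (see \eqref{st20}), none of which is controlled by the baseline; the paper handles them with the anisotropic trace/interpolation estimates over the level surfaces $\partial\Omega_\nu$ (\eqref{st21}--\eqref{st30}) together with $H^{-1}+L^q$ Stokes regularity expressing $\partial_\tau p$ through $(f'(w)z,z)^{1/2}$ (see \eqref{st27}), after which everything is again absorbed. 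Likewise, your ``algebraic recovery'' of $\partial_3^2 w_j$ from the flattened equation presupposes $f(w)\in L^2$, which is exactly what you are trying to prove; the paper instead multiplies by $\Dx w$ (Step 3), uses $\divv w=0$ only for the single component $\partial_n^2 w_n$, and closes with Young's inequality in \eqref{est4}--\eqref{est5} --- which, incidentally, is where the exponent $2-\kappa$ actually comes from, not from interpolation against the baseline bound as you suggest. So the architecture is right, but the two absorption arguments that make it work are missing.
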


\begin{proof} As before, we restrict ourselves to the formal derivation of the regularity estimate \eqref{st34}. The existence
of a solution can be verified in a standard way using, e.g., the Leray-Schauder fixed point theorem
\par
We will use the so-called nonlinear localization method and split the derivation of that estimate in several steps.

\par
\noindent {\bf  Step 1: Interior regularity.}

At this stage, we
obtain the {\it interior} $H^2$-regularity estimate for the solution $w$.
To this end,
 we multiply
equation \eqref{st1}  by $\sum_i\partial_{x_i}(\phi\partial_{x_i}w)$ where
$\phi$ is a proper nonnegative cut-off function which vanishes near
the boundary and equals one identically inside of the domain. To be more precise, we assume that $\varphi\in C_0^\infty(\R^3)$
is such that $0\le\varphi(x)\le1$, $\varphi(x)\equiv0$ if $x\in\R^3\backslash\Omega$ and $\varphi(x)\equiv1$ if $x\in\Omega_\nu$
where
$$
\Omega_\nu:=\{x\in\Omega, \ \dist(x,\partial\Omega)>\nu\}
$$
and $\nu>0$ is a sufficiently small number.
Moreover, without loss of generality, we may assume that
\begin{equation}\label{st8}
|\Nx\phi(x)|\le C_{\nu,\delta}\phi(x)^{1-\delta}
\end{equation}
for some $\delta>0$ which can be chosen arbitrarily small and the constant $C_{\nu,\delta}$ depending only on $\delta$, $\nu$
and the shape of $\Omega$.
 \par
 Thus, we may estimate the term with the Laplacian as follows:
 \begin{multline}\label{a2}
 (\Dx
 w,\sum_i\partial_{x_i}(\varphi\partial_{x_i}w))=\sum_i(\Nx\partial_{x_i}w,\Nx(\varphi\partial_{x_i}w))\geq \kappa
 \sum_i(\varphi\Nx\partial_{x_i}w,\Nx\partial_{x_i}w)-\\-(\Nx\partial_{x_i}w,\Nx\varphi\cdot\partial_{x_i}w)\ge
 \frac12\kappa(\varphi,|D^2_x w|^2)-(|\Nx\varphi|^2\varphi^{-1},|\Nx w|^2)\ge\\
 \ge\frac14\kappa\|\varphi^{1/2}w\|^2_{H^2}-C\|w\|^2_{H^1}.
 \end{multline}
This, together with the energy estimate \eqref{st3} for the
subordinated terms, gives the following estimate:
\begin{equation}\label{st7}
\|\phi^{1/2} w\|_{H^2}^2+(\phi f'(w)\Nx w,\Nx w)\le
C(1+\|g\|^2_{L^2})+|(p,\partial_{x_i}(\Nx \phi\cdot \partial_{x_i} w)|.
\end{equation}
Using again the energy estimate to control the subordinated terms and \eqref{st8} to control $|\Nx\varphi|$,
 the last term can be estimated as follows:
\begin{equation}\label{st9}
|(p,\partial_{x_i}(\Nx\phi\cdot\partial_{x_i} w)|\le
C(1+\|g\|_{L^2}^2)+\eb\|\phi^{1/2}
w\|_{H^2}^2+C_\eb\|\phi^{1/2-\delta}p\|_{L^2}^2,
 \end{equation}
where $\eb>0$ is arbitrary. In order to estimate the last term in
the right-hand side of this estimate, we use the H\"older inequality
with exponents $\frac3{2\alpha}$ and $\frac3{3-2\alpha}$
($\alpha=1/2-\delta$) in the following way:
\begin{equation}\label{a.4}
\int_\Omega\varphi^{2\alpha}|u|^2\,dx=\int_\Omega(\varphi|u|)^{2\alpha}|u|^{2(1-\alpha)}\,dx\le C\|\varphi u\|^{2\alpha}_{L^3}
\|u\|^{2(1-\alpha)}_{L^{\frac{6(1-\alpha)}{3-2\alpha}}}.
\end{equation}
Since
$\frac{6(1-\alpha)}{3-2\alpha}=\frac32(1+\frac{\delta}{1+\delta})$
and $2\alpha=1-2\delta<1$, fixing $\delta>0$ small enough that
$\frac{\delta}{1+\delta}\le \frac23\eb$ and using \eqref{st6} for
estimating the $L^{3/2+\eb}$-norm of $p$, we arrive at
\begin{equation}\label{st10}
\|\phi^{1/2-\delta}p\|_{L^2}^2\le C\|\phi p\|_{L^3}^{2\alpha}\|p\|^{2-2\alpha}_{L^{3/2+\eb}}\le Q(\|(w,p)\|_{\mathcal F_q})(1+\|\phi p\|_{L^3}).
\end{equation}
Thus, due to \eqref{st9},
\begin{equation}\label{st11}
\|\phi^{1/2} w\|^2_{H^2}+(\phi f'(w)\Nx w,\Nx w)\le C(1+\|g\|_{L^2}^2)+Q(\|(w,p)\|_{\mathcal F_q})(1+\|\phi p\|_{L^3}).
\end{equation}
 In order to estimate the term in the
right-hand side, we take the divergence from both sides of
\eqref{st1} and write out
\begin{equation}\label{st12}
\Dx p=-\divv f(w)+\divv g.
 \end{equation} Multiplying this equation
by $\phi$, we have
\begin{equation}\label{st13}
\Dx (\phi p)=2\Nx\phi\cdot\Nx p+\Dx \phi p-\phi \divv f(w)-\phi\divv
g:=h.
\end{equation}
Furthermore, due the growth assumptions \eqref{0.fcond} on $f$ and the energy estimate \eqref{st3},
\begin{multline}\label{st14}
\|\phi\divv f(w)\|_{L^{q}}^q\le \int_\Omega \phi^q|f'(w)\Nx w|^q\,dx\le
\int_\Omega |\phi f'(w)\Nx w\cdot\Nx w|^{q/2}\cdot|\phi f'(w)|^{q/2}\,dx\le\\
\le C(\phi f'(w)\Nx w,\Nx w)^{q/2}\|f'(w)\|_{L^{\frac q{2-q}}}^{q/2}= C(\phi f'(w)\Nx w,\Nx w)^{q/2}\|f'(w)\|^{q/2}_{L^{\frac{r+1}{r-1}}}\le\\
\le Q(\|(w,p)\|_{\mathcal F_q})(1+(\phi f'(w)\Nx w,\Nx w))^{q/2}.
\end{multline}
 Using this estimate together with the energy
estimate for the pressure, we arrive at
\begin{equation}\label{st15}
\|h\|_{L^{q}+H^{-1}}\le \beta (\phi f'(w)\Nx w,\Nx
w)+Q_\beta(\|(w,p)\|_{\mathcal F_q})(1+\|g\|_{L^2}),
\end{equation}
where the positive constant $\beta$ may be chosen arbitrarily small (the term $H^{-1}$
 appears due to the term $\varphi\divv g$, other terms belong to $L^q$).
 Finally, due to the maximal
regularity for the Laplacian together with the Sobolev embedding $W^{2,q}\subset L^s$ with $s=\frac{3q}{3-2q}>3$
 and $H^1\subset L^6$, we have
\begin{equation}\label{st16}
\|\phi p\|_{W^{2,q}+H^1}+\|\phi p\|_{L^{3+\eb}}\le \beta(\phi f'(w)\Nx
w,\Nx w)+Q_\beta(\|(w,p\|_{\mathcal F_q})(1+\|g\|_{L^2}),
\end{equation}
for some small positive $\eb=\eb(r)$ and $\beta$.
Inserting this estimate into
\eqref{st11} and fixing $\beta$ small enough, we conclude that
\begin{equation}\label{st17}
\|\phi^{1/2}w\|_{H^2}^2+(\phi f'(w)\Nx w,\Nx w)\le Q(\|(w,p)\|_{\mathcal F_q})(1+\|g\|_{L^2}).
\end{equation}
Using the embedding $H^2\subset C$ and the fact that $\phi$ equals
one identically inside of the domain, we deduce the desired interior
regularity
\begin{equation}\label{st18}
\|w\|_{H^2(\Omega_\nu)}+\|p\|_{H^1(\Omega_\nu)}\le
Q_\nu(\|(w,p)\|_{\mathcal F_q})(1+\|g\|_{L^2}),
\end{equation}
 where
$\nu>0$ is arbitrary. Thus, the interiror $H^2$-regularity estimate is proved.

\noindent {\bf Step 2:  Boundary regularity: tangent directions.}

We now  obtain the $H^2$-regularity in tangent directions near the
boundary. The standard approach here is to make the change of
variables which straighten the boundary locally in a small
neighborhood of a boundary point $x_0$ and then obtain the global
estimate using the proper partition of unity. However, in order to
avoid the complicated notations, we will use the alternative
equivalent approach working directly with the derivatives in
tangential directions. Namely, let $\tau_1=\tau_1(x)$ and
$\tau_2=\tau_2(x)$ be two smooth vector fields in $\bar\Omega$ which
are {\it linear independent} in a small neighborhood
$\bar\Omega\backslash\Omega_\nu$ and such that, for any
$x_0\in\partial\Omega_\nu$, $\nu\in[0,\nu_0]$ the vectors
$\tau_1(x_0)$ and $\tau_2(x_0)$ generate the tangent plane to
$\partial\Omega_\nu$.
 \par
 Being pedantic, such vector fields usually do not exist {\it globally}, but only locally (in a neighborhood of a
 fixed point $x_0\in\partial\Omega$). However, the plane-field spanned by the pair of vectors $(\tau_1,\tau_2)$ is well-defined globally
 and the {\it tangent gradient} $\nabla_{\tau}$ is also well-defined globally.
  Nevertheless, in slight abuse of rigoricity and in order to avoid the completely standard technicalities,
   we assume that the both vector fields $\tau_1$ and $\tau_2$  are globally defined in $\bar\Omega$.
\par
 Let $z(x):=\partial_\tau w(x):=\sum_{i=1}^3\tau^i(x)\partial_{x_i}w$ (where $\tau=\tau_1$ or $\tau=\tau_2$).
  Then, this function solves
\begin{equation}\label{st19}
\begin{cases}
-\Dx z+f'(w)z+\Nx (\partial_\tau p)=h_z,\\
h_z:=\partial_\tau g+
\partial_{x_i}(T_i(x)p+K_i(x)\Nx w)+L\Nx w+M(x)p,\\
\divv z=C(x)\Nx w,\ \ z\big|_{\partial\Omega}=0
\end{cases}
\end{equation}
for some smooth (matrix) functions $T,K,L,M,C$ independent of $p$
and $w$. Multiplying this equation by $z$ and using the energy
estimates \eqref{st5} and \eqref{st6} (in order to estimate the
subordinated terms) together with the facts that
$z\big|_{\partial\Omega}=0$ and $\divv z=C(x)\Nx w$, we deduce after
the simple estimates that
\begin{equation}\label{st20}
\|z\|^2_{H^1}+(f'(w)z,z)\le Q(\|(w,p)\|_{\mathcal F_q})(1+\|g\|^2_{L^2})+C\|\Nx
w\|_{L^3}\|\partial_\tau p\|_{L^{3/2}}+C\|p\|_{L^2}^2.
\end{equation}
Let us first estimate the $L^3$-norm of $\Nx w$. To this end, we need to use the interpolation in the spaces
with different regularity in tangent ($\tau_1$ and $\tau_2$) and normal ($n$) directions (here we need also the boundary $\partial\Omega$ to be
smooth enough). Indeed, since we control
the $W^{2,q}$-norm of $w$ (due to estimate \eqref{st5}), from the Sobolev's trace theorem, we have the control of the
traces of $\Nx w$ on the surfaces $\partial\Omega_\nu$ in the $W^{1/r,q}$-norm (remind that for small $\nu$, $\partial\Omega_\nu$
are uniformly smooth if the initial $\partial\Omega=\partial\Omega_0$ is smooth and the Sobolev trace theorem as well as the
interpolation theorems below work):
\begin{equation}\label{st21}
\|\Nx w\|_{C(\nu\in[0,\nu_0], W^{1/r,1+1/r}(\partial\Omega_\nu))}\le Q(\|(w,p)\|_{\mathcal F_q}).
\end{equation}
On the other hand, using the estimate \eqref{st20} for $\Nx
\partial_\tau w$, and the embedding $H^1\subset L^s$ for every $s<\infty$
for 2D domains $\Omega_\nu$, we conclude that
\begin{equation}\label{st22}
\|\Nx w\|_{L^2(\nu\in[0,\nu_0],L^s(\partial\Omega_\nu))}\le C_s\|z\|_{H^1},
\end{equation}
where the constant $C_s$ depends only on $s$.
\par
Using now the standard interpolation
$$
(L^{1+1/r}(\partial\Omega_\nu),L^s(\partial\Omega_\nu))_{2/3}=L^3(\partial\Omega_\nu)
$$
if $\frac13=\frac13\frac1{1+1/r}+\frac23\frac1s$, i.e., with
$s_r=2(1+r)$, we see that
\begin{multline}\label{a10}
\|\Nx w\|_{L^3(\Omega\backslash\Omega_{\nu_0})}=\|\Nx w\|_{L^3(\nu\in[0,\nu_0], L^3(\partial\Omega_\nu))}\le\\\le
C\|\Nx w\|_{L^\infty(\nu\in[0,\nu_0],L^{1+1/r}(\partial\Omega_\nu))}^{1/3}\|\Nx w\|^{2/3}_{L^2(\nu\in[0,\nu_0], L^{s_r}(\partial\Omega_\nu))}\le
Q(\|(w,p)\|_{\mathcal F_q})\|z\|_{H^1}^{2/3}.
\end{multline}
Using also the interior regularity \eqref{st18}, we conclude that
\begin{equation}\label{st23}
\|\Nx w\|_{L^{3}(\Omega)}\le
Q(\|(w,p)\|_{\mathcal F_q})(1+\|z\|^{2/3}_{H^1(\Omega)}).
\end{equation}
Let us now estimate the $L^{3/2}$-norm of $\partial_\tau p$. To this
end, we rewrite equation \eqref{st19} in the form
\begin{equation}\label{a11}
\Dx z-\Nx(\partial_\tau p)=h, \ \  z|_{\partial\Omega}=0,\ \divv z=C(x)\Nx w
\end{equation}
with
\begin{equation}\label{st25}
h=f'(w)z+\partial_\tau g+
\partial_{x_i}(T_i(x)p+K_i(x)\Nx w)+L(x)\Nx w+M(x)p
\end{equation}
and note that, due to the energy estimates \eqref{st5}, \eqref{st6}
and similar to \eqref{st14},
\begin{equation}\label{st26}
\|h\|_{H^{-1}+L^{q}}\le
Q(\|(w,p)\|_{\mathcal F_q})(1+(f'(w)z,z)^{1/2})+C\|g\|_{L^2}.
\end{equation}
Applying the  $H^{-1}\to H^1$ regularity and $L^q\to W^{2,q}$
regularity estimates for the linear non-homogeneous Stokes equation,
similar to \eqref{st16}, we conclude that
\begin{equation}\label{st27}
\|\partial_\tau p\|_{L^2+W^{1,q}}+\|\partial_\tau p\|_{L^{3/2+\eb}}\le
Q(\|(w,p)\|_{\mathcal F_q})(1+(f'(w)z,z)^{1/2})+C\|g\|_{L^2},
\end{equation}
where $\eb=\eb(r)>0$ is small enough.
\par
 Finally, we
need to estimate the $L^2$-norm of $p$. To this end, using the fact
that we have the control of the $W^{1,q}$-norm of the pressure $p$
and Sobolev trace theorems,  similar to \eqref{st21}, we have
\begin{equation}\label{st28}
\|p\|_{L^\infty(\nu\in[0,\nu_0], L^1(\partial\Omega_\nu))}\le
Q(\|(w,p)\|_{\mathcal F_q}).
 \end{equation}
 On the other hand, due to the embedding
$W^{1,3/2}\subset L^6$ for the 2D domains $\Omega_\nu$,
\begin{equation}\label{st29}
\|p\|_{L^{3/2}(\nu\in[0,\nu_0],L^6(\partial\Omega_\nu))}\le
C\|\partial_\tau p\|_{L^{3/2}}
\end{equation}
and, consequently, due to interpolation with exponent
($L^{9/4}=(L^1,L^6)_{2/3}=(L^\infty,L^{3/2})_{2/3}$) together with
already proved interior regularity (and the fact that $9/4>2$), we
deduce the estimate:
\begin{multline}\label{st30}
\|p\|_{L^2(\Omega)}\le \|p\|_{L^2(\Omega\backslash\Omega_{\nu_0})}+\|p\|_{L^2(\Omega_{\nu_0})}\le\\\le
  Q(\|(w,p)\|_{\mathcal F_q})(1+\|g\|_{L^2}+\| p\|_{L^{9/4}(\nu\in[0,\nu_0],L^{9/4}(\partial\Omega_\nu))})
\le\\\le Q(\|(w,p)\|_{\mathcal F_q})(1+\|g\|_{L^2}+\|p\|_{L^\infty(\nu\in[0,\nu_0], L^1(\partial\Omega_\nu))}^{1/3}\|p\|^{2/3}_{L^{3/2}(\nu\in[0,\nu_0],L^6(\partial\Omega_\nu))})\le\\\le
Q(\|(w,p)\|_{\mathcal F_q})(1+\|g\|_{L^2}+\|\partial_\tau p\|_{L^{3/4}}^{2/3})\le\\\le
Q(\|(w,p)\|_{\mathcal F_q})(1+\|g\|_{L^2}+(f'(w)z,z)^{1/3}).
\end{multline}
Inserting now estimates
\eqref{st23},\eqref{st27} and \eqref{st30} into the right-hand side
of (\eqref{st20}, we finally arrive at
\begin{equation}\label{st31}
\|z\|^2_{H^1}+\|\Nx w\|_{L^3}^2+(f'(w)z,z)+\|p\|^2_{L^2}\le Q(\|(w,p)\|_{\mathcal F_q})(1+\|g\|_{L^2}^2)
\end{equation}
and the $H^2$-regularity of $w$ in tangent directions is verified. Now, we note that
$$
\|f'(w)z\|_{H^{-1}}\le C\|f'(w)z\|_{L^{6/5}}\le C(|f'(w)|z,z))^{1/2}\|f'(w)\|_{L^{5/4}}^{1/2}
$$
and, therefore, applying the $H^{-1}$-regularity theorem to the linear Stokes problem \eqref{a11} and using \eqref{st31}, we arrive at
\begin{equation}\label{est3}
\|\partial_\tau p\|_{L^2}\le Q(\|(w,p)\|_{\mathcal F_q})(1+\|g\|_{L^2})(1+\|f'(w)\|_{L^{5/4}}^{1/2}).
\end{equation}
\par
\noindent {\bf Step 3: Regularity in normal direction and the final estimate.}
Let us now multiply equation \eqref{st1} by $\Dx w$. Then, after integration by parts, we get
\begin{equation}\label{est1}
\|w\|^2_{H^2}+(|f'(w)|\Nx w,\Nx w)\le C(1+\|g\|^2)+C|(\Nx p,\Dx w)|.
\end{equation}
In addition, the second term in the right-hand side gives
\begin{multline*}
(f'(w)\Nx w,\Nx w)\ge \kappa(|w|^{r-1},|\Nx w|^2)\ge \kappa_1\|\Nx(|w|^{(r+1)/2})\|^2_{L^2}\ge\\
\ge
\kappa_3\|w\|^{r+1}_{L^{3(r+1)}}\ge\kappa_4\|f(w)\|_{L^3}^{r/(r+1)}-C
\end{multline*}
and, consequently, using the energy estimate and the interpolation
$$
\|f(w)\|_{L^2}\le C\|f(w)\|_{L^1}^{1/4}\|f(w)\|_{L^3}^{3/4},
$$
we will have
\begin{equation}\label{est2}
\|w\|^2_{H^2}+\|f(w)\|_{L^2}^{4(r+1)/(3r)}\le C(1+\|g\|^2)+C|(\Nx p,\Dx w)|.
\end{equation}
Thus, we only need to estimate the last term in the right-hand side of this inequality.
 To this end, we split the tangential and normal derivatives in that term and use \eqref{st31} and \eqref{est3} for estimating the
 tangential derivatives:
\begin{multline}\label{est4}
|(\Nx p,\Dx w)|\le |(\partial_\tau p,(\Dx w)_\tau)|+|(\partial_n p,(\Dx w)_n)|\le \frac14\|w\|^2_{H^2}+\\+C\|\partial_\tau p\|_{L^2}^2+
|(\partial_n p,\partial_n^2 w_n)|+C(|\Nx p|,|\Nx\partial_\tau w|+|\Nx w|)\le \frac14\|w\|^2_{H^2}+\\+
Q(\|(w,p)\|_{\mathcal F_q})[1+(1+\|g\|_{L^2}^2)\|f(w)\|_{L^{5/4}}+\|\Nx p\|_{L^2}(1+\|g\|_{L^2})]+\\+C\|\Nx p\|_{L^2}\|\partial_n^2w_n\|_{L^2}.
\end{multline}
To estimate the last term in the right-hand side of \eqref{est4}, we use that $w$ is divergent free and, therefore,
$$
\partial_n w_n+\partial_{\tau_1} w_{\tau_1}+\partial_{\tau_2}w_{\tau_2}=C(x)w.
$$
Differentiating that equation in the direction of the normal vector field, we obtain the estimate
$$
\|\partial_n^2w_n\|_{L^2}\le C\|\Nx\partial_\tau w\|_{L^2}+C\|\Nx w\|_{L^2}.
$$
Inserting that estimate to the right-hand side of \eqref{est4} and \eqref{est2} and using \eqref{st31} together with the obvious estimate
$$
\|\Nx p\|_{L^2}\le C\|f(w)\|_{L^2}+C\|g\|_{L^2}
$$
(which follows from the $L^2$-maximal regularity for the linear Stokes equation) and the interpolation
$$
\|f(w)\|_{L^{4/5}}\le C\|f(w)\|_{L^1}^{3/5}\|f(w)\|_{L^2}^{2/5},
$$
we arrive at
\begin{multline}\label{est5}
\|w\|^2_{H^2}+\|f(w)\|_{L^2}^{4(r+1)/(3r)}\le\\\le Q(\|(w,p)\|_{\mathcal F_q})
[1+(1+\|g\|_{L^2}^2)\|f(w)\|_{L^{2}}^{2/5}+\|f(w)\|_{L^2}(1+\|g\|_{L^2})].
\end{multline}
Finally, thanks to Young inequality, we derive from \eqref{est5} that
$$
\|w\|_{H^2}\le Q(\|(w,p)\|_{\mathcal F_q})(1+\|g\|_{L^2}^{2-\kappa}),
$$
where $\kappa=\kappa(r)>0$ is a positive number. Thus, the theorem is proved.
\end{proof}
\begin{remark} Clearly,
when the nonlinear terms has the growth
$$
|f(u)|\leq C(1+|u|^3), \forall u \in \R^3,
$$
the maximal regularity estimate \eqref{st34} follows directly from
the energy estimate \eqref{st3} and the regularity estimate for the
linear Stokes problem. Note also that, in the case of periodic
boundary conditions the simple multiplication of the initial problem
by $\Dx w$ gives better estimate
$$
\|w\|_{H^2}\le C(1+\|g\|_{L^2}).
$$
However, for the case of Dirichlet boundary conditions, the additional (uncontrollable) boundary terms
 appear under the integration by parts, and we unable to obtain the $H^2$-regularity estimate which is linear with respect to
  the $L^2$-norm of $g$. However, as we will see below, the sub-quadratic growth rate of that estimate with respect to $g$ is
   enough to be able to apply it for the Navier-Stokes-type problem.
\end{remark}
To be more precise, we want to apply the above result to the following analogue of problem \eqref{st1} perturbed by the Navier-Stokes inertial term:
\begin{equation}\label{nst1}
\begin{cases}
-\Dx w+(w,\Nx)w+f(w)+\Nx p=g, \  \ \divv w=0, \  x \in \Omega, \\
w=0 , \ \ x \in \partial\Omega,\ \ \int_\Omega p(x)\,dx=0.
\end{cases}
\end{equation}
Indeed, since the inertial term vanishes after the multiplication the equation by $w$ and integrating over $x$, arguing as in Lemma \ref{Lem-simple},
we have:
$$
\|w\|_{H^1}^2+\|w\|^{r+1}_{L^{r+1}}\le C(1+\|g\|_{L^q}^q).
$$
In addition,
\begin{equation}\label{in32}
\|(w,\Nx)w\|_{L^{3/2}}\le C\|w\|_{L^6}\|\Nx w\|_{L^2}\le Q(\|g\|_{L^2}),
\end{equation}
for some monotone function $Q$. Thus, the $L^q$-norm of the inertial term is under the control if $q\le 3/2$ (= $r\ge2$) and,
applying the $L^q$-regularity estimate for the linear Stokes problem, we also have the control of the $W^{1,q}$-norm of $p$.
So, we have proved that
\begin{equation}\label{ns-simple}
\|(w,p)\|_{\mathcal F_q}\le Q(\|g\|_{L^q})
\end{equation}
if $r\ge2$. The next Corollary gives the $H^2$-regularity estimate
for the problem\eqref{nst1}.
\begin{corollary}\label{CorA.reg} Let $f$ satisfy \eqref{0.fcond} with $r\ge2$ and $g\in L^2$. Then, any
energy solution $(w,p)\in\mathcal F_q$ of problem \eqref{nst1} belongs to $H^2\times H^1$ and the following estimate holds:
\begin{equation}\label{ns-reg}
\|w\|_{H^2}+\|p\|_{H^1}\le Q(\|g\|_{L^2})
\end{equation}
for some monotone function $Q$.
\end{corollary}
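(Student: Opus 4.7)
The plan is to treat \eqref{nst1} as the semilinear Stokes problem \eqref{st1} perturbed by the inertial term $(w,\Nx)w$, and then to close a nonlinear feedback loop using the sub-quadratic dependence on the forcing provided by Theorem \ref{mr}. First, I note that the hypothesis $r\geq 2$ and the energy-type bound \eqref{ns-simple} (combined with $L^2\subset L^q$ on the bounded domain $\Omega$) immediately give $\|(w,p)\|_{\mathcal F_q}\leq Q(\|g\|_{L^2})$, so every occurrence of $Q(\|(w,p)\|_{\mathcal F_q})$ below may be replaced by $Q(\|g\|_{L^2})$.

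Next, I would rewrite \eqref{nst1} as
$$
-\Dx w+\tilde f(w)+\Nx p=\tilde g,\qquad \tilde f(u):=f(u)+Ku,\qquad \tilde g:=g+Kw-(w,\Nx)w,
$$
so that $\tilde f$ satisfies \eqref{0.fcond} with $K=0$ and Theorem \ref{mr} applies. This yields
$$
\|w\|_{H^2}+\|p\|_{H^1}\leq Q(\|g\|_{L^2})\bigl(1+\|\tilde g\|_{L^2}^{2-\kappa}\bigr).
$$
The linear contribution $Kw$ to $\tilde g$ is harmless because $\|w\|_{L^2}\leq Q(\|g\|_{L^2})$ by the energy estimate, so the only delicate term is $(w,\Nx)w$. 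Using H\"older together with the Sobolev embedding $H^1\subset L^6$ and the Gagliardo--Nirenberg interpolation $\|\Nx w\|_{L^3}\leq C\|\Nx w\|_{L^2}^{1/2}\|w\|_{H^2}^{1/2}$ in 3D, I would estimate
$$
\|(w,\Nx)w\|_{L^2}\leq \|w\|_{L^6}\,\|\Nx w\|_{L^3}\leq C\|w\|_{H^1}\|\Nx w\|_{L^2}^{1/2}\|w\|_{H^2}^{1/2}\leq Q(\|g\|_{L^2})\,\|w\|_{H^2}^{1/2}.
$$

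Plugging this into the maximal regularity bound produces
$$
\|w\|_{H^2}+\|p\|_{H^1}\leq Q(\|g\|_{L^2})\bigl(1+\|w\|_{H^2}^{(2-\kappa)/2}\bigr),
$$
and since $(2-\kappa)/2<1$, a standard application of Young's inequality absorbs the $\|w\|_{H^2}$-factor on the right into the left-hand side, giving the desired estimate \eqref{ns-reg}. The main obstacle, and the reason the nonlinear localization of Theorem \ref{mr} is really needed here, is precisely that the exponent on $\|\tilde g\|_{L^2}$ must be strictly less than $2$: a merely linear dependence would be fine, but any exponent $\geq 2$ would generate $\|w\|_{H^2}^{\geq 1}$ after the interpolation of $(w,\Nx)w$, and the absorption step would fail. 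The sub-quadratic exponent $2-\kappa$ from Theorem \ref{mr} is thus exactly what is needed to close the bootstrap, which is the content of the remark preceding the corollary.
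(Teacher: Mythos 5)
Your derivation of the a priori estimate is correct and essentially coincides with the paper's: the paper likewise treats the inertial term as an external force, applies Theorem \ref{mr} together with \eqref{ns-simple}, interpolates to get $\|(w,\Nx)w\|_{L^2}\le Q(\|g\|_{L^2})\|w\|_{H^2}^{1/2}$ (the paper goes through $\|(w,\Nx)w\|_{L^{3/2}}^{2/3}\|(w,\Nx)w\|_{L^6}^{1/3}$ rather than your H\"older/Gagliardo--Nirenberg splitting, but the resulting exponent $1/2$ is the same), and closes with Young's inequality using $2-\kappa<2$. Your explicit reduction to the case $K=0$ via $\tilde f(u)=f(u)+Ku$, $\tilde g=g+Kw-(w,\Nx)w$ is a point the paper leaves implicit, and it is a correct way to match the hypotheses of Theorem \ref{mr}.

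However, there is a genuine gap: the corollary asserts that \emph{any} energy solution $(w,p)\in\mathcal F_q$ belongs to $H^2\times H^1$, and your argument never establishes this regularity --- it only derives the bound \eqref{ns-reg} under the tacit assumption that $w$ is already regular. Two places break down for a mere energy solution. First, Theorem \ref{mr} requires the forcing to be in $L^2$, but for an energy solution the effective forcing $\tilde g=g+Kw-(w,\Nx)w$ is only known to lie in $L^{3/2}$ (estimate \eqref{in32}); you cannot invoke the theorem before you know $(w,\Nx)w\in L^2$, which is exactly what you are trying to prove. Second, the absorption step via Young's inequality is meaningless unless $\|w\|_{H^2}<\infty$ is known beforehand: an inequality of the form $X\le Q(1+X^{(2-\kappa)/2})$ gives no information when $X=+\infty$. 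For the purely dissipative problem \eqref{st1} this circle is broken by uniqueness of the energy solution (one constructs a regular solution and identifies it with the given one), but \eqref{nst1} contains the Navier--Stokes inertial term and its energy solutions need not be unique, so that route is blocked. This is precisely why the paper's proof has a second part: it constructs a regular solution by approximating $f$ by globally bounded nonlinearities $f_n$, and then, in order to identify the \emph{given} energy solution with a regular one, it passes to the modified problem \eqref{nst2} with a large damping term $Rv$ and forcing $g_w=g+Rw$; for $R\gg1$ the energy solution of \eqref{nst2} \emph{is} unique, $v=w$ solves it, and the regular solution constructed for \eqref{nst2} must therefore coincide with $w$. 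Without this (or some equivalent) identification argument, your proof establishes only the formal a priori bound, not the stated regularity of an arbitrary energy solution.
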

\begin{proof} Let us first formally deduce {\it a priori} estimate \eqref{ns-reg}. To this end, we interpret
 the inertial term as an external force and apply estimate \eqref{st34}. Then, using \eqref{ns-simple}, we have
\begin{equation}\label{ns-est1}
 \|w\|_{H^2}\le Q(\|g\|_{L^2})(1+\|(w,\Nx)w\|_{L^2}^{2-\kappa})
 \end{equation}
 for some monotone $Q$ and some positive $\kappa=\kappa(r)$. Thus, we only need to estimate the $L^2$-norm
  of the inertial term. To this end, we use \eqref{in32} together with the interpolation inequalities and the fact that
  $H^2\subset W^{1,6}$:
 \begin{multline}
 \|(w,\Nx)w\|_{L^2}\le \|(w,\Nx)w\|_{L^{3/2}}^{2/3}\|(w,\Nx)w\|_{L^6}^{1/3}\le\\\le
  Q(\|g\|_{L^2})\|w\|_{L^\infty}^{1/3}\|w\|_{H^2}^{1/3}\le
 Q_1(\|g\|_{L^2})\|w\|_{H^1}^{1/6}\|w\|_{H^2}^{1/2}\le Q_2(\|g\|_{L^2})\|w\|_{H^2}^{1/2}.
 \end{multline}
 Inserting this estimate in the right-hand side of \eqref{ns-est1}, we deduce the desired a priori estimate \eqref{ns-reg}.
 \par
 The existence of a solution $(w,p)\in H^2\times H^1$ can be obtained in a standard way based on that estimate and
  approximating, for instance,  the growing non-linearity $f(w)$ by a sequence $f_n(w)$ of globally bounded ones (see, e.g.,
   \cite{KZ} for the details). However, since the solution of \eqref{nst1} may be not unique, we still need to verify that any
   energy solution of that equation satisfies estimate \eqref{ns-reg}.
   \par
   Indeed, let $w$ be an energy solution of \eqref{nst1}. Let us consider the following
   modified equation \eqref{nst1}:
\begin{equation}\label{nst2}
\begin{cases}
-\Dx v+(v,\Nx)v+f(v)+Rv+\Nx p=g_w, \  \ \divv v=0, \  x \in \Omega, \\
v=0 , \ \ x \in \partial\Omega,\ \ \int_\Omega p(x)\,dx=0.
\end{cases}
\end{equation}
with $g_w:=g+Rw$. We claim that the solution $v=w$ of that equation is
 {\it unique} in the class of energy solutions if $R\gg1$ is large enough (this fact can be easily verified
 using the standard energy estimates). On the other hand, arguing as before, we can construct a regular solution
$(v,p)\in H^2\times H^1$ of that equation satisfying \eqref{ns-reg}. The uniqueness guarantees then that the initial solution
$u$ is also regular and satisfies this estimate. Thus, the corollary is proved.
 \end{proof}
\begin{remark} Of course, the condition $r\ge2$ is not necessary for the validity of Corollary \ref{CorA.reg}. Indeed, for
$r<2$, the $L^2$ (and even $L^3$) norm of nonlinearity $f(w)$ is under the control due to the initial energy estimate
 and the desired $H^2$-regularity can be obtained treating the nonlinearity $f$ as the external force (exactly as in the
  case of the classical stationary Navier-Stokes problem, see e.g. \cite{La}). However, we are mainly interested by
   the case of large $q$ where the above presented {\it nonlinear} localization technique becomes unavoidable.
 \end{remark}

\end{document}